\renewcommand{\bar}[1]{\overline{#1}}
\newcommand{\ZZ}{\mathbb{Z}}
\newcommand{\NN}{\mathbb{N}}
\newcommand{\KK}{\mathbb{K}}
\newcommand{\G}{\mathcal{G}}
\newcommand{\BB}{\mathcal{B}}
\newcommand{\spann}{\textrm{span}}
\newcommand{\HG}{\mathcal{H}}
\newcommand{\TT}{\mathcal{T}}
\newcommand{\MM}{\mathcal{M}}
\newcommand{\UU}{\mathcal{U}}
\newcommand{\MT}{\mathbb{M}}
\newcommand{\RG}{\mathcal{R}}
\newcommand{\e}{\mathbf{e}}
\newcommand{\CC}{\mathbb{C}}
\newcommand{\X}{\mathbf{X}}
\newcommand{\Ide}{\mathsf{I}}
\newcommand{\coker}{\operatorname{coker}}
\newtheorem{lemma}{Lemma}[section]
\newtheorem{corollary}[lemma]{Corollary}
\newtheorem{theorem}[lemma]{Theorem}
\newtheorem{proposition}[lemma]{Proposition}
\theoremstyle{definition}
\newtheorem{definition}[lemma]{Definition}
\newtheorem{example}[lemma]{Example}
\newtheorem{remark}[lemma]{Remark}
\begin{document}

\title[The self-similar infinite dihedral group]{The homology of the  groupoid of the self-similar infinite dihedral group}

\author{Eduard Ortega}
\address{Department of Mathematical Sciences\\
NTNU\\
NO-7491 Trondheim\\
Norway } \email{Eduard.Ortega@ntnu.no}

\author{Alvaro Sanchez}
\address{Departament de Matemàtiques \\
Universitat Autònoma de Barcelona\\
08193 Bellaterra (Barcelona)\\
Spain } \email{alvarosanchez@mat.uab.cat}


\thanks{The second-named author was partially supported by Project MTM2017-83487-P from the Ministerio de Economía, Industria y Competitividad (Spain).}

\subjclass[2010]{22A22, 46L80, 19D55, 37B05, 46L05}

\keywords{Ample groupoid, homology of étale groupoids, self-similar group, HK conjecture, AH conjecture, Property TR}

\date{\today}

\begin{abstract}
We compute the $K$-theory of the $C^*$-algebra associated to the self-similar infinite dihedral group, and the homology of its associated étale groupoid. We see that the rational homology differs from the $K$-theory, strongly contradicting a conjecture posted by Matui. Moreover, we compute the abelianization of the topological full group of the groupoid associated to the self-similar infinite dihedral group.  
\end{abstract}

\maketitle

\section{Introduction}
\label{Intro}

In \cite{Mat4} Matui conjectured that  the homology groups of a minimal effective, ample  groupoid $\G$ totally captures the $K$-theory of their associated reduced groupoid $C^*$-algebra, and called it the HK conjecture, that is,
$$K_i(C^*_r(\G))\cong \sum_{k=0}^\infty H_{i+2k}(\G,\ZZ)\qquad \text{for }i=0,1\,.$$
This conjecture has been proved for important classes of groupoids, like the transformation groupoids of Cantor minimal systems, products of Cuntz-Krieger groupoids and many others  (see \cite{FKPS,Mat2,Mat4,Ort}). More generally, it was proved in \cite{PY} that the conjecture is true whenever $H_n(\G,\ZZ)=0$ for $n\geq 3$.  
However, the conjecture  was shown to be false by Scarparo in \cite{Sca} for a class of odometers of the infinite dihedral group. Then one can ask if a weakening of the conjecture could be true, the \emph{rational HK-conjecture}, that is  
$$K_i(C^*_r(\G))\otimes \mathbb{Q}\cong \sum_{k=0}^\infty H_{i+2k}(\G,\ZZ)\otimes \mathbb{Q}\qquad \text{for }i=0,1\,.$$
The rational HK-conjecture is satisfied by the transformations groupoids of $\ZZ^n$-minimal actions on the Cantor space \cite[Section 3]{Mat4}. 
Scarparo's counter-example is still valid for $i=0$ but not for $i=1$. In here we  present a complete counter-example for the rational HK-conjecture. Our example arises from a self-similar action of the infinite dihedral group  over a rooted tree and its associated groupoid, that not only encodes the global action of the group but also the local self-similarities of this action \cite[Section 5]{Nek}. This groupoid is going to be minimal and locally contracting, and its associated (reduced) groupoid $C^*$-algebra is going to be simple and  purely infinite. This contrasts with the counter-examples of Scarparo, whose associated $C^*$-algebras were AF-algebras.  
Therefore, there is not an obvious relationship between the example presented here and the one by Scarparo other than the infinite dihedral group, that is in the core of both constructions. However, in view of \cite{PY} one can suspect that the existence of high non-trivial homology groups for the infinite dihedral group, in contrast with the groups involved in the cases where the HK-conjecture is proved true, could be the cause of this bad homological behaviour.

Topological full groups associated to dynamical systems (and more generally to étale groupoids) are perhaps best known for being complete invariants for continuous orbit equivalence (and groupoid isomorphism), and also for diagonal preserving isomorphism of the associated $C^*$-algebra. Roughly speaking, the topological full group $[[\G]]$ of an étale groupoid $\G$ consists of all homeomorphisms of the unit space $\G^{(0)}$ which preserve the orbits of the dynamical system in a continuous manner.  Topological full groups also provide means of constructing new groups with interesting properties, like the Higman-Thompson group and its generalizations \cite{Mat4}, and  most notably by providing the first examples of finitely generated simple groups that are amenable (and infinite) \cite{JM}. In the works made by Matui \cite{Mat2,Mat3,Mat4} it was conjectured that the topologically full group of a minimal effective, ample groupoid together with the first two homology groups fit in the exact sequence 
$$H_0(\G)\otimes \ZZ/2\ZZ\longrightarrow [[\G]]_{ab}\longrightarrow H_1(\G)\longrightarrow 0\,,$$
where $[[\G]]_{ab}$ is the abelianization of $[[\G]]$.
This so called AH-conjecture has been verified in several cases \cite{Mat4,NO,Sca}, and so far no counter-example has been found.

The paper is structured as follows. In section 1 we define the $C^*$-algebra $C^*(\Gamma,\X)$ associated to a self-similar group $(\Gamma,\X)$ given in \cite[Definition 3.1]{Nek} and we introduce an equivalent construction via crossed products, that is more accessible  for the computation of the  $K$-theory. In Section 2 we introduce a realization of the infinite dihedral group as a self-similar group given in \cite{GNS}. With this in hand we compute the $K$-theory of its $C^*$-algebra taking advantage of the crossed product construction. In Section 3 we introduce the groupoid $\G_{(\Gamma,\X)}$ of the self-similar group $(\Gamma,\X)$, that is the groupoid of germs of all the local self-similarities. This groupoid first introduced in \cite{Nek}, got a more modern treatment in \cite{EP}. We prove that the groupoid associated to the infinite dihedral self-similar group is a locally compact, Hausdorff, minimal ample groupoid, and its reduced groupoid $C^*$-algebra $C_r^*(\G_{(\Gamma,\X)})$ is simple and purely infinite, and  isomorphic to $C^*(\Gamma,\X)$. We compute the low homology groups of this groupoid using similar techniques to the ones used in \cite{Ort}.  Finally in Section 4 we verify, using techniques developed in \cite{NO}, the AH-conjecture for the groupoid $\G_{(\Gamma,\X)}$, that is, there exists an exact sequence
$$H_0(\G_{(\Gamma,\X)})\otimes \ZZ/2\ZZ\longrightarrow [[\G_{(\Gamma,\X)}]]_{ab}\longrightarrow H_1(\G_{(\Gamma,\X)})\longrightarrow 0\,.$$
Then we can use the computations done in Section 3 to show that $[[\G_{(\Gamma,\X)}]]_{ab}\cong \ZZ/2\ZZ$

 \section{K-theory of the $C^*$-algebra associated to the infinite dihedral self-similar group}
In this section we are going to define the $C^*$-algebra associated to the self-similar action of the infinite
dihedral group \cite{Nek}, and compute its $K$-theory using an alternative construction as crossed product \cite[Section 3]{Nek}.

Let $\X$ be a finite  alphabet. Let $\X^n$ denote the words of length $n$. The empty word $\emptyset$ will be the only word of length $0$. Let $\X^*:=\bigcup_{n=0}^\infty\X^n$ be the set of finite words over the alphabet $\X$. Then $\X^*$ has a natural structure of rooted tree in which every word $v\in \X^*$ is connected by an edge to all the words of the form $vx$ for $x\in \X$. The root of $\X^*$ is the empty word $\emptyset$.  
A \emph{self-similar group} $(\Gamma,\X)$ is a faithful action of a group $\Gamma$  on the rooted tree $\X^*$ by automorphisms, such that for  every $\gamma\in \Gamma$ and every $x\in \X$, there exists $\eta\in \Gamma$ and $y\in\X$ such that 
$$\gamma\cdot (xw)=y(\eta\cdot w)\,,$$ 
for every $w\in \X^*$. We will denote $\eta=\gamma_{|x}$.
We write the above equality formally as 
$$\gamma\cdot x=y\cdot \gamma_{|x}\,.$$

\begin{definition}[{\cite[Definition 3.1]{Nek}}]
	The $C^*$-algebra associated to a self-similar group $(\Gamma,\X)$ is the universal $C^*$-algebra $C^*(\Gamma,\X)$ generated by a set of unitaries $\{U_\gamma \}_{\gamma\in \Gamma}$ and isometries $\{S_x\}_{x\in \X}$ satisfying the following relations:
	\begin{enumerate}
		\item  The map $\gamma \mapsto U_\gamma$ is a unitary representation,
		\item $\sum_{x\in \X}S_xS^*_x=1$ and $S^*_xS_x=1$ for every $x\in \X$.
		\item For all $\gamma\in \Gamma$ and $x\in\X$:
		$$U_\gamma S_x=S_yU_{\gamma_{|x}}\,,$$
		whenever $\gamma\cdot x=y\cdot \gamma_{|x}$.
		
	\end{enumerate}
\end{definition}

Given $n\in \NN$, let  $\X^n$ be set of words of length $n$, and  we will denote by  $\MT_{\X^n}$ the ring of $|\X|^n\times |\X|^n$ matrices over $\mathbb{C}$ indexed by $\X^n$, and given $x,y \in \X^n$ let $\e^{(|\X|^n)}_{x,y}$ be the corresponding matrix unit.

Now let $(\Gamma,\X)$ be a self-similar group, with $|\X|=d$.
Let $\CC[\Gamma]=\spann\{U_\gamma:\gamma\in \Gamma\}$ be the group algebra, and given $n\in\NN$ let us define the map 
$$\Phi_n:\CC[\Gamma]\otimes \MT_{\X^n} \to \CC[\Gamma]\otimes \MT_{\X^{n+1}}\,,$$  
$$U_\gamma\otimes \e^{(d^n)}_{v,w}\mapsto\sum_{x\in \X} U_{\gamma_{|x}}\otimes \e^{(d^{n+1})}_{vy,wx}$$ for $\gamma\in \Gamma$ and  for $v,w\in \X^n$ and $y\in \X$ such that $\gamma\cdot x=y\cdot \gamma_{|x}$. Observe that $\Phi_n$ extends to a unital $*$-homomorphism $\Phi_n:C^*(\Gamma)\otimes \MT_{\X^{n}}\to C^*(\Gamma)\otimes \MT_{\X^{n+1}}$. We call $\Phi_n$ the \emph{matrix recursion map at the $n$-level}.

Let $\Gamma$ be the infinite dihedral group, that is,  
$$\Gamma=\langle a,b: a^2=b^2=e\rangle\,.$$
The group $\Gamma$ is amenable, and $K_0(C^*(\Gamma))\cong \ZZ^3\cong \left\langle  \left[ 1\right] , \left[ \frac{1+U_a}{2}\right] , \left[ \frac{1+U_{b}}{2}\right] \right\rangle $, where $U:\Gamma\to U(C^*(\Gamma))$ is the canonical representation of $\Gamma$ in the unitaries of $C^*(\Gamma)$, and $K_1(C^*(\Gamma))=0$. To simplify notation we will write $1=U_e$ (the unit of $C^*(\Gamma)$).

The group $(\Gamma,\X)$ is a self-similar group (see \cite[Example 4]{GNS}) with $\X=\{0,1\}$ and relations
$$a\cdot 0=1\cdot e,\qquad a\cdot 1=0\cdot e,\qquad b\cdot 0=0\cdot a\qquad\text{and}\qquad b\cdot 1=1\cdot b\,.$$
Observe that $a,b\in\text{Aut}(\X^{\infty})$ have order $2$. The matrix recursion map, is given  by 
$$\Phi_0:C^*(\Gamma)\to C^*(\Gamma)\otimes \MT_2$$
$$U_a\mapsto 1\otimes \e^{(2)}_{0,1}+1\otimes \e^{(2)}_{1,0}$$
$$U_b\mapsto U_a\otimes \e^{(2)}_{0,0}+U_b\otimes\e^{(2)}_{1,1}\,,$$
and in general 
$$\Phi_n:C^*(\Gamma)\otimes \MT_{2^n}\to C^*(\Gamma)\otimes \MT_{2^{n+1}}$$
$$U_a\otimes \e^{(2^n)}_{v,w}\mapsto 1\otimes \e^{(2^{n+1})}_{v0,w1}+1\otimes \e^{(2^{n+1})}_{v1,w0}$$
$$U_b\otimes \e^{(2^n)}_{v,w}\mapsto U_a\otimes \e^{(2^{n+1})}_{v0,w0}+U_b\otimes\e^{(2^{n+1})}_{v1,w1}\,.$$
Let  us define
$$\MM_\Gamma:=\varinjlim (C^*(\Gamma)\otimes \MT_{2^n},\Phi_n)\,.$$

\begin{proposition}\label{K-theory-sub}
Let $(\Gamma, X)$ be the self-similar infinite dihedral group. Then there exists a group isomorphism 

\begin{center}

$\Psi_0: K_0 (\MM_\Gamma) \rightarrow \ZZ[1/2]\oplus \ZZ$

\end{center}
such that 
\begin{center}

$\Psi_0 ([\Phi_{n,\infty}(1\otimes \e_{v,v}^{(2^n)})])=(1/2^{n},0)$, 

$\Psi_0 ([\Phi_{n,\infty}((\frac{1+U_a}{2})\otimes\e_{v,v}^{(2^{n+1})} )])= (1/2^{n+1},0)$,

$\Psi_0 ([\Phi_{n,\infty}((\frac{1+U_b}{2})\otimes\e_{v,v}^{(2^{n+1})} )])= (-\sum_{k=1}^n1/2^{k+1},1)$,

\end{center}
for every $v\in \X^n$.
Moreover, $K_1(\MM_\Gamma)=0$.
\end{proposition}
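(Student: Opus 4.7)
The proof will follow from the continuity of $K$-theory for sequential inductive limits applied to the system $(C^*(\Gamma)\otimes\MT_{\X^n},\Phi_n)$. Under the Morita identifications $K_i(C^*(\Gamma)\otimes\MT_{\X^n})\cong K_i(C^*(\Gamma))$, every stage contributes $K_1=0$, so immediately $K_1(\MM_\Gamma)=\varinjlim 0=0$. For $K_0$, each stage is the free abelian group $\ZZ^3$ on the basis $\{[1],[\tfrac{1+U_a}{2}],[\tfrac{1+U_b}{2}]\}$, so the whole computation reduces to identifying the inductive limit $\varinjlim(\ZZ^3,M)$, where $M$ is the common integer matrix representing $(\Phi_n)_\ast$.

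The first concrete step is to evaluate $\Phi_n$ on diagonal projections $p\otimes\e_{v,v}^{(2^n)}$ with $p\in\{1,\tfrac{1+U_a}{2},\tfrac{1+U_b}{2}\}$ using the explicit recursion. One reads off
\begin{align*}
(\Phi_n)_\ast[1]&=2[1],\\
(\Phi_n)_\ast\bigl[\tfrac{1+U_a}{2}\bigr]&=[1],\\
(\Phi_n)_\ast\bigl[\tfrac{1+U_b}{2}\bigr]&=\bigl[\tfrac{1+U_a}{2}\bigr]+\bigl[\tfrac{1+U_b}{2}\bigr];
\end{align*}
the middle identity uses that $\Phi_n(\tfrac{1+U_a}{2}\otimes\e_{v,v}^{(2^n)})$ is a rank-one projection supported on the $2\times 2$ block indexed by $\{v0,v1\}$, hence Murray--von Neumann equivalent to a single matrix unit.

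Next I would define candidate stagewise maps $\Psi_0^{(n)}:\ZZ^3\to\ZZ[1/2]\oplus\ZZ$ by the three formulas appearing in the statement and verify the compatibility $\Psi_0^{(n+1)}\circ(\Phi_n)_\ast=\Psi_0^{(n)}$ on each generator; for the third generator this reduces to the telescoping identity $\tfrac{1}{2^{n+2}}-\sum_{k=1}^{n+1}\tfrac{1}{2^{k+1}}=-\sum_{k=1}^{n}\tfrac{1}{2^{k+1}}$. The universal property of the inductive limit then yields a well-defined $\Psi_0:K_0(\MM_\Gamma)\to\ZZ[1/2]\oplus\ZZ$ realising the displayed formulas. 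Surjectivity is straightforward, since the image already contains every $(1/2^n,0)$, and after subtracting a suitable $\ZZ[1/2]$-term from the image of the third generator one also obtains $(0,1)$.

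The only delicate point is injectivity. A class in $K_0(\MM_\Gamma)$ represented by $(p,q,r)\in\ZZ^3$ at stage $n$ lies in $\ker\Psi_0^{(n)}$ precisely when $r=0$ and $2p+q=0$, i.e.\ when $(p,q,r)\in\ker M=\ZZ\cdot(1,-2,0)$. The saving observation is that $M$ annihilates its own kernel, $M(1,-2,0)=(0,0,0)$, so such a class becomes zero already after one more application of $M$ in the inductive system. This is really the only non-routine step in the argument and it forces $\Psi_0$ to be injective.
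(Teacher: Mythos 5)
Your proposal is correct and follows essentially the same route as the paper: compute the induced map $(\Phi_n)_*$ on $K_0(C^*(\Gamma))\cong\ZZ^3$ (getting $[1]\mapsto 2[1]$, $[P]\mapsto[1]$, $[Q]\mapsto[P]+[Q]$) and identify the inductive limit $\varinjlim(\ZZ^3,M)$ with $\ZZ[1/2]\oplus\ZZ$. The only difference is that you spell out the final identification — the stagewise maps $\Psi_0^{(n)}$, the telescoping compatibility, and the observation that $\ker\Psi_0^{(n)}=\ker M$ is annihilated by $M$, which gives injectivity — whereas the paper simply asserts the isomorphism after listing the relations; your added detail is correct and is exactly what is needed to justify that assertion.
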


\begin{proof}

The second statement is trivial, since $K_1$ is a continuous functor and $K_1(C^*(\Gamma)\otimes \MT_{2^n})=0$ for every $n$.\\
To compute $K_0 (\MM_\Gamma)$ we must inspect the induced homomorphism 
$$\Phi_0^*:\ZZ^3\cong K_0(C^*(\Gamma))\to K_0(C^*(\Gamma)\otimes \MT_2)\cong K_0(C^*(\Gamma)) \cong\ZZ^3\,,$$
where the isomorphism $K_0(C^*(\Gamma)\otimes \MT_2)\cong K_0(C^*(\Gamma))$ is the one induced by natural inclusion $C^*(\Gamma)\to C^*(\Gamma)\otimes \MT_2$ given by $x\mapsto x\otimes \e^{(2)}_{0,0}$.
Let $P:=\frac{1+U_a}{2}$ and $Q:=\frac{1+U_b}{2}$, then $\ZZ^3\cong K_0(C^*(\Gamma))=\langle[1],[P],[Q]\rangle$. First,  we have that 
$$\Phi^*_0([1])=\left[ \left( \begin{array}{cc} 
1 & 0 \\ 0 & 1 
\end{array}\right) \right]=2[1]\in K_0(C^*(\Gamma))\,,$$
and 
$$\Phi^*_0([P])=\left[ \left( \begin{array}{cc} 
\frac{1}{2} & \frac{1}{2} \\ \frac{1}{2} & \frac{1}{2} 
\end{array}\right) \right]=\left[ \left( \begin{array}{cc} 
1 & 0 \\ 0 & 0 
\end{array}\right) \right]=[1]\in K_0(C^*(\Gamma)) \,.$$
Finally,
$$\Phi^*_0([Q])=\left[ \left( \begin{array}{cc} 
\frac{1+U_a}{2} & 0 \\ 0 & \frac{1+U_b}{2} 
\end{array}\right) \right]=[P]+[Q]\in K_0(C^*(\Gamma))\,.$$

Now,  given $n\in \NN$ we define $1_n=\Phi_{n,\infty}(1\otimes \e^{(2^n)}_{0^n,0^n})$, $P_n:=\Phi_{n,\infty}(P\otimes \e^{(2^n)}_{0^n,0^n})$ and $Q_n:=\Phi_{n,\infty}(Q\otimes \e^{(2^n)}_{0^n,0^n})$, we have the following  relations in $K_0(\MM_\Gamma)$:
$$[1_n]=2[1_{n+1}]$$
$$[P_n]=[1_{n+1}]$$
$$[Q_n]=[Q_{n+1}]+[P_{n+1}]=[Q_{n+1}]+[1_{n+2}]\,.$$

Therefore, we have that $K_0(\MM_\Gamma)\cong \ZZ[1/2]\oplus \ZZ$ with the identifications $[1_n]\mapsto (1/2^n,0)$, $[P_n]\mapsto (1/2^{n+1},0)$ and $[Q_n]\mapsto (-\sum_{k=1}^n1/2^{k+1},1)$.
\end{proof}

Let $S:\MM_\Gamma\to \MM_\Gamma$ be the map defined by $\Phi_{n,\infty}(x\otimes \e^{(2^n)}_{v,w})\mapsto \Phi_{n+1,\infty}(x\otimes \e^{(2^{n+1})}_{0v,0w})$, and let us define  $$\BB_\Gamma:=\varinjlim(\MM_\Gamma,S)\cong \MM_\Gamma\otimes\KK\,,$$ where $\KK$ is the $C^*$-algebra of compact operators over an infinite dimensional separable Hilbert space. Therefore, we can identify the $K$-theory of $\MM_\Gamma$ with the one of $\BB_\Gamma$ via the inclusion map $\MM_\Gamma\to \BB_\Gamma$ given by $x\mapsto S_{0,\infty}(x)$.

Let $\hat{S}:\BB_\Gamma\to \BB_\Gamma$ be the automorphism given by $$S_{k,\infty}(\Phi_{n,\infty}(x\otimes \e^{(2^n)}_{v,w}))\mapsto S_{{k-1},\infty}(\Phi_{n,\infty}(x\otimes \e^{(2^n)}_{v,w}))=S_{{k},\infty}(\Phi_{n+1,\infty}(x\otimes \e^{(2^{n+1})}_{0v,0w}))\,.$$

Now consider the crossed product $C^*$-algebra $\BB_\Gamma \rtimes_{\hat{S}}\ZZ$. It was shown in \cite[Theorem 3.7]{Nek} that $\BB_\Gamma\rtimes_{\hat{S}}\ZZ$ is Morita equivalent to $C^*(\Gamma,\X)$.


\begin{proposition}\label{K-theory}

Let $(\Gamma,\X)$ be the self-similar infinite dihedral group. Then there exist group isomorphisms 

\begin{center}

$K_0(\BB_\Gamma\rtimes_{\hat{S}} \ZZ)=K_0(C^*(\Gamma, X))\cong\coker(1-\hat{S})=\ZZ$ and $K_1(\BB_\Gamma\rtimes_{\hat{S}} \ZZ)=K_1(C^*(\Gamma, X))\cong\ker (1-\hat{S})=\ZZ$

\end{center}

\end{proposition}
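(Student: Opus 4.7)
My plan is to invoke the Pimsner--Voiculescu six-term exact sequence for the crossed product $\BB_\Gamma \rtimes_{\hat{S}} \ZZ$. Since Proposition~\ref{K-theory-sub} gives $K_1(\MM_\Gamma)=0$ and $\BB_\Gamma \cong \MM_\Gamma\otimes \KK$, stability of $K$-theory yields $K_1(\BB_\Gamma)=0$, while $K_0(\BB_\Gamma)\cong K_0(\MM_\Gamma)\cong \ZZ[1/2]\oplus \ZZ$ through the natural inclusion. With $K_1(\BB_\Gamma)$ killed, the PV sequence collapses to
\[
0 \longrightarrow K_1(\BB_\Gamma\rtimes_{\hat{S}} \ZZ) \longrightarrow K_0(\BB_\Gamma) \xrightarrow{\;1-\hat{S}_*\;} K_0(\BB_\Gamma) \longrightarrow K_0(\BB_\Gamma\rtimes_{\hat{S}} \ZZ) \longrightarrow 0,
\]
so the outer terms are exactly $\ker(1-\hat{S}_*)$ and $\coker(1-\hat{S}_*)$, respectively. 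The Morita equivalence between $\BB_\Gamma \rtimes_{\hat{S}} \ZZ$ and $C^*(\Gamma,\X)$ quoted from \cite[Theorem 3.7]{Nek} will then transfer these identifications to $K_*(C^*(\Gamma,\X))$.

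The heart of the argument is the explicit computation of $\hat{S}_*$ on $\ZZ[1/2]\oplus \ZZ$. Because $\hat{S}$ is the shift automorphism on the stationary inductive limit, the intertwining $\hat{S}\circ S_{k,\infty}=S_{k-1,\infty}=S_{k,\infty}\circ S$ shows that, under the identification $K_0(\BB_\Gamma)\cong K_0(\MM_\Gamma)$ afforded by $(S_{k,\infty})_*$, the action of $\hat{S}_*$ is conjugate to $S_*$. I would therefore trace the three generators $[1_n],[P_n],[Q_n]$ through $S$: by definition $S([1_n])=[1_{n+1}]$, $S([P_n])=[P_{n+1}]$, and $S([Q_n])=[Q_{n+1}]$, which combined with the relations of Proposition~\ref{K-theory-sub} will give $\hat{S}_*$ as an explicit (upper triangular) endomorphism of $\ZZ[1/2]\oplus \ZZ$ that is in fact invertible because multiplication by $1/2$ is a unit on $\ZZ[1/2]$.

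Once $\hat{S}_*$ is in hand, computing $\ker(1-\hat{S}_*)$ and $\coker(1-\hat{S}_*)$ is a short piece of linear algebra: the image of $1-\hat{S}_*$ lives entirely in the $\ZZ[1/2]$ coordinate (because $\hat{S}_*$ is the identity in the $\ZZ$ coordinate), and one checks that the map hits all of $\ZZ[1/2]$, giving cokernel $\ZZ$; a parallel check on the equation $(1-\hat{S}_*)(x,y)=0$ determines a free rank-one subgroup isomorphic to $\ZZ$ for the kernel.

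The main obstacle is step two: correctly identifying $\hat{S}_*$ in the coordinates provided by Proposition~\ref{K-theory-sub}, in particular checking that the off-diagonal contribution coming from $[Q_n]\mapsto [Q_{n+1}]$ is compatible with the geometric-series form of the first component of $\Psi_0([Q_n])$. Once this bookkeeping is settled the rest of the argument is routine, and the Morita equivalence citation delivers the conclusion for $C^*(\Gamma,\X)$.
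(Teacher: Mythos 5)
Your proposal is correct and follows essentially the same route as the paper: the Pimsner--Voiculescu sequence collapses because $K_1(\BB_\Gamma)=0$, and the paper likewise computes $\hat{S}_*$ on the generators via $\hat{S}_*([1_n])=[1_{n+1}]$ and $\hat{S}_*([Q_n])=[Q_{n+1}]=[Q_n]-[1_{n+2}]$, then reads off $\ker(1-\hat{S}_*)\cong\ZZ$ and $\coker(1-\hat{S}_*)\cong\ZZ$ exactly as in your final step. The bookkeeping point you flag as the main obstacle does work out as you expect, since the off-diagonal term $[1_{n+2}]$ lands in the $\ZZ[1/2]$ coordinate and the first-coordinate action of $1-\hat{S}_*$ is multiplication by $1/2$, which is bijective on $\ZZ[1/2]$.
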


\begin{proof}
Using the Pimsner-Voiculescu six-term exact sequence, and since $K_1(\MM_\Gamma)=K_1(\BB_\Gamma)=0$, it follows that 
$$K_0(\BB_\Gamma\rtimes_{\hat{S}} \ZZ)\cong\coker(1-\hat{S}^*)\qquad\text{and}\qquad K_1(\BB_\Gamma\rtimes_{\hat{S}} \ZZ)\cong\ker (1-\hat{S}^*)$$ where $\hat{S}^*:K_0(\BB_\Gamma)\to K_0(\BB_\Gamma)$ is the map induced by the automorphism $\hat{S}:\BB_\Gamma\to \BB_\Gamma$ defined above. Then it is
 enough to check what is the image of the generators of $K_0(\BB_\Gamma)$ by the automorphism $\hat{S}^*$.  
Given $n\in \NN$ we define $1_n=\Phi_{n,\infty}(1\otimes \e^{(2^n)}_{0^n,0^n})$ and $Q_n:=\Phi_{n,\infty}(Q\otimes \e^{(2^n)}_{0^n,0^n})$. 
We have seen in the Proposition \ref{K-theory-sub} that $\{[1_n],[Q_n]\}_{n\in\NN}$ generates $K_0(\BB_\Gamma)$. Now we have that 
$$\hat{S}^*([1_n])=[\hat{S}(1_n)]=[1_{n+1}]\qquad \text{and}\qquad \hat{S}^*([Q_n])=[\hat{S}(Q_n)]=[Q_{n+1}]=[Q_n]-[1_{n+2}]\,,$$
for every $n\in \NN$.

Therefore using the identification of Proposition \ref{K-theory-sub}, we have that $1-\hat{S}^*:\ZZ[1/2]\oplus \ZZ \to \ZZ[1/2]\oplus \ZZ$ is given by $(1/2^n,0)\mapsto (1/2^{n+1},0)$ and $(0,1)\mapsto (1/2^2,0)$. Therefore, the kernel is generated by $(1/2,-1)$, so it is isomorphic to $\ZZ$, and since $(1-\hat{S}^*)(\ZZ[1/2]\oplus \ZZ)=(\ZZ[1/2],0)$ it follows that the cokernel is also isomorphic to $\ZZ$.
\end{proof}

\begin{remark}
The self-similar infinite dihedral group $(\Gamma,\X)$ is the Iterated Monodromy Group of the rational function $f(z)=z^2-2$ (see \cite[Example 11]{GNS}). In \cite[Theorem 4.8]{Nek} it is shown how to compute the $K$-theory of the $C^*$-algebra associated to the Iterated Monodromy Group of a post-critically finite hyperbolic rational function. 
\end{remark}

\section{Low dimension homology of the groupoid associated to the self-similar infinite dihedral group.}

A groupoid $\G$ is a small category of isomorphisms. Then the \emph{source} of $g\in \G$ is $g^{-1}g$ and the \emph{range} of $g$ is $gg^{-1}$. We denote by $\G^{(0)}:=\{gg^{-1}:g\in \G\}$ the \emph{set of units} of $\G$. A \emph{topological groupoid} is a groupoid with a topology that makes the groupoid operations (product, inversion, source and range) continuous. A groupoid $\G$ is  \emph{effective} if the interior of $\textrm{Iso}(\G):=\{g\in \G: r(g)=s(g)\}$ is equal to $\G^{(0)}$, and $\G$ is \emph{principal} if $\textrm{Iso}(\G)=\G^{(0)}$. A groupoid $\G$ is \emph{minimal} if $r(s^{-1}(x))$ is dense in $\G^{(0)}$ for every $x\in \G^{(0)}$.  A groupoid $\G$ is \emph{étale} if the source and range maps are local homeomorphisms. We say that an étale groupoid $\G$ is \emph{ample} if $\G^{(0)}$ is zero-dimensional, i.e. admits a basis of compact open sets. 
A \emph{bisection} of $\G$ is an open subsets $U$ of $\G$ such that the restriction of the source and range maps to $U$ are homeomorphism. Observe that an étale groupoid has always a basis consisting of bisections, and ample groupoids have a basis of compact bisections.

For a subset $A\subseteq \G^{(0)}$ we define $\G_A=\{g\in \G: s(g),r(g)\in A\}$, and this is a subgroupoid of $\G$ with unit space $A$. We say that $A\subseteq \G^{(0)}$ is \emph{$\G$-full} if $r(s^{-1}(x))\cap A\neq \emptyset$ for every $x\in \G^{(0)}$.
Two étale groupoids $\G$ and $\HG$ with totally disconnected unit spaces are called \emph{Kakutani equivalent} if there exist full clopen subsets $A$ and $B$ of $\G^{(0)}$ and $\HG^{(0)}$ respectively, such that $\G_A\cong \HG_B$ (\cite[Definition 4.1]{Mat2}).

\begin{example}[Transformation groupoid] Let $\Gamma$ be a discrete group with unit $e$, and let us consider an action of $\Gamma$ on a locally compact Hausdorff space $X$. Then we define the \emph{transformation groupoid} $X\rtimes \Gamma$ as the set $X\times \Gamma$ with the product topology, such that 
$$s(x,\gamma)=(x,e)\,,\qquad r(x,\gamma)=(\gamma\cdot x,e)\qquad\text{and}\qquad (\gamma_1\cdot x,\gamma_2)(x,\gamma_1)=(x,\gamma_2\gamma_1)\,.$$
If $X$ is a Cantor space, then $X\rtimes \Gamma$ is an ample groupoid.
\end{example}
\begin{example}[groupoid of germs]
Let $X$ be a locally compact Hausdorff space and $G$ be  an inverse semigroup of local homeomorphisms of $X$, i.e. homeomorphisms between open subsets of $X$. Then given a pair $(g,x)\in G\times X$,  its $G$-germ $[g,x]$ is an equivalence class where $(g,x)\sim (g',x')$ if and only if $x=x'$ and $g$ and $g'$ coincide in a neighborhood of $x$. Then the \emph{groupoid of $G$-germs} $\G$ have operations given by
$$[g_1,x_1][g_2,x_2]=[g_1g_2,x_2]$$
if and only if $g_2(x_2)=x_1$, and 
$$[g,x]^{-1}=[g^{-1},g(x)]\,.$$
The topology of the groupoid of $G$-germs is given by the basis of open sets
$$\UU_{U,g}:=\{[g,x]:x\in U\}\,,$$
for $g\in G$ and $U$ an open subset of the domain of $g$. 

It is not hard to see that the groupoid of $G$-germs is always a locally compact, effective étale groupoid, and the above defined $\UU_{U,g}$ are bisections.         

\end{example}

Let $\pi:X\to Y$ be a local homeomorphism  between two locally compact Hausdorff spaces. Then given any $f\in C_c(X,\ZZ)$ we define 
$$\pi_*(f)(y):=\sum_{\pi(x)=y}f(x)\,.$$
It is not hard to show that $\pi_*(f)\in C_c(Y,\ZZ)$. 

Given an étale groupoid $\G$ and $n\in\NN$ we write $\G^{(n)}$ for the space of composable strings of $n$ elements in $\G$ with the product topology. For $i=0,\ldots,n$, we let $d_i:\G^{(n)}\to\G^{(n-1)}$ be the map defined by 
$$d_i(g_1,g_2,\ldots,g_n)=\left\lbrace \begin{array}{ll}
(g_2,g_3,\ldots,g_n) & \text{if }i=0\,, \\
(g_1,\ldots, g_{i}g_{i+1},\ldots, g_n) & \text{if }1\leq i\leq n-1\,, \\
(g_1,g_2,\ldots,g_{n-1})& \text{if }i=n\,.
\end{array}\right. $$

We define the homomorphism $\delta_n:C_c(\G^{(n)},\ZZ)\to C_c(\G^{(n-1)},\ZZ)$ given by
$$\delta_1=s_*-r_*\qquad\text{and}\qquad\delta_n=\sum_{i=0}^n(-1)^nd_{i*}\,.$$
Then we define the homology $H_*(\G)$ as the homology groups of the chain complex $C_\bullet(\G,\ZZ)$ given by
$$0\longleftarrow C_c(\G^{(0)},\ZZ)\longleftarrow^{\delta_1} C_c(\G^{(1)},\ZZ)\longleftarrow^{\delta_2} C_c(\G^{(2)},\ZZ)\longleftarrow\cdots$$

 It was proved in \cite[Theorem 3.6(2)]{Mat2} that two Kakutani equivalent groupoids have isomorphic homology groups.

Let $\Gamma$ be a countable discrete group and $\G$ an étale groupoid. When $\rho:\G\to\Gamma$ is a groupoid homomorphism, the \emph{skew product} $\G\times_\rho\Gamma$ is $\G\times\Gamma$ with the following groupoid structure: $(g,\gamma)$ and $(g',\gamma')$ are composable if and only if $g$ and  $g'$ are composable and $\gamma\rho(g)=\gamma'$, and then
$$(g,\gamma)\cdot(g',\gamma\rho(g))=(gg',\gamma)\qquad\text{and}\qquad(g,\gamma)^{-1}=(g^{-1},\gamma\rho(g))\,.$$
Given $n\in\NN$ we can define the action $\hat{\rho}:\Gamma\curvearrowright(\G\times_\rho \Gamma)^{(n)}$ by 
$$\hat{\rho}^\gamma((g_1,\gamma_1),\ldots,(g_n,\gamma_n))=((g_1,\gamma\gamma_1),\ldots,(g_n,\gamma\gamma_n))\,.$$

Now let $(\Gamma,\X)$ be a self-similar group.  
Let $\X^\infty$ be the set of infinite words $x_1x_2x_3\cdots $ in the alphabet $\X$, with the topology given by the sets
$$Z(\alpha)=\{\alpha x: x\in \X^\infty \}\,,$$
for $\alpha\in \X^*$. With this topology $\X^\infty$ is homeomorphic to the Cantor space if $|\X|\geq 2$. 
Then we have that $\Gamma$ acts by homeomorphism on $\X^\infty$ as follows: Given $g\in\Gamma$ we define $g\cdot x_1x_2x_3\cdots =y_1y_2y_3\cdots $ with  $g_{n}\cdot x_n= y_n \cdot g_{n+1}$ where $g_1=g$ and $g_{n+1}=(g_n)_{|x_n}$ for all $n$.
  
Let $\G_{(\Gamma,\X)}$ be the groupoid of $G$-germs of the local homeomorphisms $G=\{S_{(\alpha,g,\beta)}:\alpha,\beta\in \X^*\text{ and }g\in\Gamma\}$ of $\X^\infty$, where $S_{(\alpha,g,\beta)}:Z(\beta)\to Z(\alpha)$ by $\beta x\mapsto \alpha(g\cdot x)$ for every $x\in \X^\infty$, that is
$$\G_{(\Gamma,\X)}=\{ [\alpha,g,\beta; x]: \alpha,\beta\in \X^*,\,g\in \Gamma,\,x\in Z(\beta) \}\,,$$
where $[\alpha,g,\beta; x]=[\alpha',g',\beta'; x']$ if and only if $x=x'$ and there exists a neighborhood $U$ of $x$ such that $S_{(\alpha,g,\beta)}(y)=S_{(\alpha',g',\beta')}(y)$ of every $y\in U$. Observe then that $[\alpha,g,\beta; \beta \gamma x ]=[\alpha(g\cdot \gamma),g_{|\gamma},\beta\gamma; \beta \gamma x ]$. Then the groupoid operations are defined as follows: 
$$[S_{(\alpha',g',\beta')}, x']\cdot [S_{(\alpha,g,\beta)}, x]=[S_{(\alpha',g'g,\beta)}, \beta x]$$
if and only if $\beta'=\alpha$ and $x'=\alpha (g\cdot x)$, and 
$$[\alpha,g,\beta;\beta x]^{-1}=[\beta,g^{-1},\alpha; \alpha (g\cdot x)]\,.$$
Then we have that $$\G_{(\Gamma,\X)}^{(0)}=\{[v,e,v;x]:x\in\X^\infty \}\,,$$ so we can identify $\G_{(\Gamma,\X)}^{(0)}$ with $\X^\infty$.
The  open subsets of $\G_{(\Gamma,\X)}$ are 
$$Z(\alpha,g,\beta;U)=\{[\alpha,g,\beta; x]: x\in U  \}$$
for $\alpha,\beta\in \X^*$, $g\in \Gamma$ and an open subset $U$ of $Z(\beta)$.
With this topology $\G_{(\Gamma,\X)}$ is a  minimal, locally compact, effective and locally contracting ample groupoid \cite[Section 17]{EP}. 

\begin{definition}
	We will say that $(\Gamma,\X)$ is \emph{pseudo-free} if, whenever $g\cdot x =x\cdot e$ for some $g\in \Gamma$ and $x\in \X$, then $g=e$.
\end{definition}

\begin{lemma}
The self-similar infinite dihedral group $(\Gamma,\X)$ is pseudo-free. 
\end{lemma}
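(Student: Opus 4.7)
The plan is to reduce pseudo-freeness to the injectivity of the restriction homomorphism at a letter, and then to obtain that injectivity by recognising that the letter-stabiliser of $\Gamma$ in $\X$ is again isomorphic to $\Gamma$, with the restriction maps realising this isomorphism on generators.

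Concretely, $a$ swaps the two letters of $\X=\{0,1\}$ while $b$ fixes them, so the action of $\Gamma$ on $\X$ factors through the homomorphism $\pi:\Gamma\to\ZZ/2\ZZ$ with $a\mapsto 1$ and $b\mapsto 0$. The common stabiliser of $0$ and $1$ is then $H:=\ker\pi$, an index two subgroup which Reidemeister--Schreier presents (with transversal $\{e,a\}$) as $H=\langle b,aba\mid b^2=(aba)^2=e\rangle\cong \ZZ/2\ZZ * \ZZ/2\ZZ\cong \Gamma$. For $g,h\in H$ the cocycle identity $(gh)_{|x}=g_{|h\cdot x}\cdot h_{|x}$ collapses to the multiplicative law $(gh)_{|x}=g_{|x}\cdot h_{|x}$ because $h$ fixes $x$, so for each $x\in\X$ the map $\phi_x:H\to\Gamma$, $g\mapsto g_{|x}$, is a group homomorphism. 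A direct computation from the four self-similarity relations gives $b_{|0}=a$, $b_{|1}=b$, $(aba)_{|0}=b$ and $(aba)_{|1}=a$, so $\phi_0$ and $\phi_1$ each send the generating pair of $H$ onto a generating pair of $\Gamma$ compatibly with the defining relations $b^2=(aba)^2=e$ and $a^2=b^2=e$; hence both are isomorphisms and in particular injective.

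The statement then follows at once: given $g\in\Gamma$ and $x\in\X$ with $g\cdot x=x\cdot e$, the equality $g\cdot x=x$ in $\X$ forces $g\in H$, the equality $g_{|x}=e$ reads as $\phi_x(g)=e$, and injectivity of $\phi_x$ yields $g=e$. The main obstacle is the structural identification $H\cong \Gamma$ via the restrictions; once that is in place the rest of the argument is purely formal, and the only calculations to do are the evaluations of $(aba)_{|x}$ via the cocycle, which have to be done without mixing up the order of composition.
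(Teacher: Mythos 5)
Your proof is correct, but it takes a genuinely different route from the paper. The paper argues by brute force: it lists the normal forms $(ab)^n$, $(ab)^na$, $(ba)^n$, $(ba)^nb$ of elements of $\Gamma$, computes inductively how each acts on a letter and what the restriction is, and reads off that a trivial restriction together with a fixed letter forces $n=0$ and $g=e$. You instead observe that the action on $\X$ factors through $\pi:\Gamma\to\ZZ/2\ZZ$, so the stabiliser of either letter is $H=\ker\pi=\langle b,aba\rangle\cong\ZZ/2\ZZ\ast\ZZ/2\ZZ$, that the restriction maps $\phi_x:H\to\Gamma$ are homomorphisms (the cocycle $(gh)_{|x}=g_{|h\cdot x}h_{|x}$ degenerates to a product when $h$ fixes $x$), and that $\phi_0,\phi_1$ carry the free generating involutions $b,aba$ of $H$ to the free generating involutions of $\Gamma$, hence are isomorphisms; injectivity then gives pseudo-freeness immediately. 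Your computations $b_{|0}=a$, $b_{|1}=b$, $(aba)_{|0}=b$, $(aba)_{|1}=a$ are correct with the paper's conventions, and the Reidemeister--Schreier presentation of $H$ is the standard one. What your argument buys is a structural explanation (the action is self-replicating: the letter stabiliser restricts isomorphically onto the whole group), which also yields the stronger statement that $g\mapsto g_{|x}$ is injective on the full stabiliser, not just that its kernel meets the identity; what the paper's computation buys is that it produces the explicit formulas for $(ab)^n\cdot x$ and its restrictions, which are reused implicitly in the later homology computations. Either proof is acceptable.
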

\begin{proof}
Let $g\in \Gamma$, then $g$ is of the form either $(ab)^na$, $(ab)^n$, $(ba)^nb$ or $(ba)^n$ for some $n\geq 0$. First we check that 
$$(ab)\cdot 0= 1\cdot a\qquad \text{and}\qquad \qquad (ab)\cdot 1=0\cdot b\,,$$
and then 
$$(ab)^{2n}\cdot 0=0\cdot (ba)^n\qquad \text{and}\qquad (ab)^{2n}\cdot 1=1\cdot (ab)^n\,,$$
meanwhile
$$(ab)^{2n+1}\cdot 0=1\cdot a(ba)^n\qquad \text{and}\qquad (ab)^{2n+1}\cdot 1=0\cdot b(ab)^n\,.$$
But then 
$$(ab)^{2n}a\cdot 0=1\cdot (ab)^n\,,\qquad (ab)^{2n}a\cdot 1=0\cdot (ba)^n\,,$$
$$(ab)^{2n+1}a\cdot 0=0\cdot b(ab)^n\qquad\text{and}\qquad (ab)^{2n}a\cdot 1=1\cdot a(ba)^n\,.$$
The above computations show that whenever $g$ is of the form $(ab)^n$ or $(ab)^na$, $g\cdot \alpha=\alpha\cdot e$ implies $g=e$.

Similar computations follows whenever $g$ is of the form $(ba)^nb$ or $(ba)^n$. 
\end{proof}

Since  $(\Gamma,\X)$ is a pseudo-free self-similar group it follows that 
$$[\alpha,g,\beta; x]=[\alpha,g',\beta; x]$$
if and only if $g=g'$ \cite[Proposition 8.6]{EP}. Moreover, the groupoid $\G_{(\Gamma,\X)}$ is Hausdorff \cite[Proposition 12.1]{EP}.

\begin{lemma}\label{minimal_action}
Consider the  self-similar infinite dihedral group $(\Gamma,\X)$. Then given any $\alpha,\beta\in \X^n$ for $n\in \NN$, there exists $g\in \Gamma$ such that $g\cdot \alpha=\beta$.
\end{lemma}
\begin{proof}
It is enough to prove that given $x,y\in \X=\{0,1\}$ there exist $g_{x,y},h_{x,y}\in \Gamma$ such that $$g_{x,y}\cdot x=y\cdot a\qquad\text{and}\qquad h_{x,y}\cdot x=y\cdot b\,.$$
Therefore one can check that 
$$g_{0,0}=b\,,\qquad h_{0,0}=aba\,,\qquad g_{1,0}=ba\,,\qquad h_{1,0}=ab\,,$$
$$g_{0,1}=ab\,,\qquad h_{0,1}=ba\,,\qquad g_{1,1}=aba\,,\qquad h_{1,1}=b\,.$$

\end{proof}

If $(\Gamma,\X)$ is the  self-similar group, then one can define a cocycle  $c:\G_{(\Gamma,\X)}\to \ZZ$ given by $c([S_{(\alpha,g,\beta)},  x])=|\alpha|-|\beta|$; then we have that the subgroupoid 
$$\HG_{(\Gamma,\X)}:=c^{-1}(0)=\{[\alpha,g,\beta;\beta x]:|\alpha|=|\beta|\}$$ is Kakutani equivalent to the skew product groupoid $\G_{(\Gamma,\X)}\times_c\ZZ$. We can decompose $\HG_{(\Gamma,\X)}$ as the union $\bigcup_{n\in \NN} (\HG_{(\Gamma,\X)})_n$ where 
$$(\HG_{(\Gamma,\X)})_n:=\{[\alpha,g,\beta;\beta x]:|\alpha|=|\beta|=n\}\,.$$ 
Observe that $(\HG_{(\Gamma,\X)})_n\subseteq (\HG_{(\Gamma,\X)})_{n+1}$,
and hence $H_*(\HG_{(\Gamma,\X)})\cong \varinjlim (H_*((\HG_{(\Gamma,\X)})_n),(\iota_n)_*)$, where $(\iota_n)_*$ is the map induced by the inclusion $\iota_n: (\HG_{(\Gamma,\X)})_n\to (\HG_{(\Gamma,\X)})_{n+1}$ \cite[Lemma 1.4]{Ort}.

\begin{proposition}\label{lower_hom}

Let $(\Gamma,X)$ be the self-similar infinite dihedral group, $\G_{(\Gamma,\X)}$ its associated groupoid of germs, and $\HG_{(\Gamma,\X)}:=c^{-1}(0)$. Then we have that  $H_0(\HG_{(\Gamma,\X)})\cong \ZZ[1/2]$ and $\HG_1(\HG_{(\Gamma,\X)})\cong\ZZ/2\ZZ$. 
\end{proposition}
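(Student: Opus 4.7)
My plan is to exploit the filtration $\HG_{(\Gamma,\X)} = \bigcup_n (\HG_{(\Gamma,\X)})_n$ together with the isomorphism $H_*(\HG_{(\Gamma,\X)}) \cong \varinjlim_n H_*((\HG_{(\Gamma,\X)})_n)$ from \cite[Lemma 1.4]{Ort}, reducing the problem to a computation at each level plus an analysis of the connecting maps. The first key observation is that $Z(0^n) \subseteq \X^\infty$ is a full clopen subset of $(\HG_{(\Gamma,\X)})_n^{(0)}$: any $y = v\sigma^n(y) \in \X^\infty$ is connected to $0^n\sigma^n(y) \in Z(0^n)$ by the germ $[0^n, e, v; y]$. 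Moreover, the restriction $(\HG_{(\Gamma,\X)})_n|_{Z(0^n)}$ consists precisely of germs $[0^n, g, 0^n; 0^n x]$ (since both $\alpha=0^n$ and $\beta=0^n$ are forced) and is canonically isomorphic to the transformation groupoid $\Gamma \ltimes \X^\infty$ via $[0^n, g, 0^n; 0^n x] \leftrightarrow (g, x)$. By Kakutani invariance \cite[Theorem 3.6(2)]{Mat2} we obtain $H_*((\HG_{(\Gamma,\X)})_n) \cong H_*(\Gamma \ltimes \X^\infty) \cong H_*(\Gamma; C_c(\X^\infty, \ZZ))$.

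For $H_0$ I would argue directly, bypassing the Kakutani step. Each bisection $Z(\alpha, g, \beta; Z(\beta))$ with $|\alpha|=|\beta|$ has source $Z(\beta)$ and range $Z(\alpha)$, because $g$ acts by homeomorphism on $\X^\infty$; this yields $[\Ind_{Z(\beta)}] = [\Ind_{Z(\alpha)}]$ in $H_0$. Combined with the set-theoretic splitting $\Ind_{Z(w)} = \Ind_{Z(w0)} + \Ind_{Z(w1)}$, this forces $[\Ind_{Z(w)}] = 1/2^{|w|}$ and gives $H_0(\HG_{(\Gamma,\X)}) \cong \ZZ[1/2]$.

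For $H_1$ I would apply the Mayer-Vietoris long exact sequence for the free product $\Gamma = \langle a\rangle * \langle b\rangle$ with coefficients $M := C_c(\X^\infty, \ZZ)$:
\begin{equation*}
0 \to H_1(\langle a\rangle; M) \oplus H_1(\langle b\rangle; M) \to H_1(\Gamma; M) \to M \to M_{\langle a\rangle} \oplus M_{\langle b\rangle} \to M_\Gamma \to 0.
\end{equation*}
Because $a$ acts on $\X^\infty$ by flipping the first letter (a free involution), a direct check gives $(1+a)M = M^{\langle a\rangle}$, so $H_1(\langle a\rangle; M) = 0$. For $b$, the recursion $b\cdot 0 = 0\cdot a$, $b\cdot 1 = 1\cdot b$ identifies the $Z(0)$-part of $H_1(\langle b\rangle; M)$ with $H_1(\langle a\rangle; C_c(\X^\infty,\ZZ)) = 0$ and reduces the $Z(1)$-part to the same problem on $M|_{Z(1)}$. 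Iterating, the support localises at the unique $b$-fixed point $1^\infty$, yielding $H_1(\langle b\rangle; M) \cong \ZZ/2\ZZ$. A check of the map $M \to M_{\langle a\rangle} \oplus M_{\langle b\rangle}$ shows no extra classes arise, so $H_1(\Gamma \ltimes \X^\infty) \cong \ZZ/2\ZZ$.

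Finally, I would track the connecting maps $(\iota_n)_*$ using the rewriting identity $[0^n, g, 0^n; 0^n y] = [0^n(g\cdot y_1), g_{|y_1}, 0^n y_1; 0^n y_1 \sigma(y)]$ arising from $g\cdot y_1 = (g\cdot y_1) \cdot g_{|y_1}$. At the level of $H_0$ they act as the identity on $\ZZ[1/2]$, and at the level of $H_1$ they preserve the unique $\ZZ/2\ZZ$ generator (since the fixed point $1^\infty$ is stable under the rewriting). Passing to the direct limit then gives the claimed isomorphisms. The main obstacle is the recursive computation of $H_1(\langle b\rangle; M)$ together with the precise bookkeeping of how $(\iota_n)_*$ acts on the resulting 2-torsion class, to rule out collapse or the appearance of additional torsion in the colimit.
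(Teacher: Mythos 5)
Your proposal is correct and reaches both stated isomorphisms, but for $H_1$ it takes a genuinely different route from the paper. For $H_0$ the two arguments coincide: both use the filtration $\HG_{(\Gamma,\X)}=\bigcup_n(\HG_{(\Gamma,\X)})_n$, the relation $[\Ind_{Z(\alpha)}]=[\Ind_{Z(\beta)}]$ for $|\alpha|=|\beta|$, and the doubling relation $[\Ind_{Z(w)}]=2[\Ind_{Z(w0)}]$ to land in $\ZZ[1/2]$. For $H_1$ the paper works at each level with the explicit generators $[1_{Z(0^n,g,0^n;U)}]$, $g\in\{a,b\}$, identifies $H_1((\HG_{(\Gamma,\X)})_n)$ with $C(\X^\infty,\ZZ/2\ZZ\oplus\ZZ/2\ZZ)$, and then computes the colimit via the restriction rules $a_{|x}=e$, $b_{|0}=a$, $b_{|1}=b$; you instead pass to $H_1(\Gamma;M)$ with $M=C(\X^\infty,\ZZ)$ and run Mayer--Vietoris for $\Gamma=\langle a\rangle\ast\langle b\rangle$, killing the $\langle a\rangle$-summand because the first-letter flip acts freely, and extracting a single $\ZZ/2\ZZ$ from the unique $b$-fixed point $1^\infty$. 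This is exactly the strategy the paper itself uses for the higher homology in Lemma~\ref{higher_hom}, so your route unifies the treatment, and it yields the sharper (and more accurate) statement that $H_1((\HG_{(\Gamma,\X)})_n)\cong\ZZ/2\ZZ$ already at every finite level: the paper's intermediate identification with $C(\X^\infty,\ZZ/2\ZZ\oplus\ZZ/2\ZZ)$ overstates the level-$n$ group (for instance $1_{\{a\}\times Z(00)}$ is not a cycle, and $1_{\{a\}\times\X^\infty}$ represents the zero class since $\Ind_{\X^\infty}=(1+a)\Ind_{Z(0)}$ is a norm), although both descriptions give the same colimit $\ZZ/2\ZZ$. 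What the paper's approach buys is the explicit cylinder-level generators that are reused in Section~4 for Property TR; what yours buys is a structural computation that is harder to get wrong. Two steps you only flag must actually be carried out: (i) the vanishing of $\ker\bigl(M\to M_{\langle a\rangle}\oplus M_{\langle b\rangle}\bigr)=(1-a)M\cap(1-b)M$, without which torsion-free classes would enter $H_1(\Gamma;M)$ --- this does hold, by induction on the number of letters a function in the intersection depends on, using $b\cdot 0w=0(a\cdot w)$ and $b\cdot 1w=1(b\cdot w)$; and (ii) the connecting maps, where $(\iota_n)_*[b]_n=[a]_{n+1}+[b]_{n+1}=[b]_{n+1}$ confirms that the generator survives to the colimit.
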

\begin{proof}

To compute the homology groups, we must investigate the induced maps in homology of the natural inclusions $\iota_{n,m}:(\HG_{(\Gamma,\X)})_n\to (\HG_{(\Gamma,\X)})_{m}$ for $n<m$ ($\iota_n:=\iota_{n,n+1}$), that induce a map 
$$\iota_{n,m}:C_c((\HG_{(\Gamma,\X)})_n,\ZZ)\to C_c((\HG_{(\Gamma,\X)})_{m},\ZZ)$$
defined by  $1_{Z(\alpha,g,\beta;Z(\beta\gamma))}\mapsto 1_{Z(\alpha(g\cdot \gamma),g_{|\gamma},\beta\gamma;Z(\beta\gamma))}$ for $\alpha,\beta\in \X^n$, $\gamma\in \X^{m-n}$ and $g\in \Gamma$.
Identifying $(\HG_{(\Gamma,\X)})_n^{(0)}$ with $\X^\infty$, we claim  that $[1_{Z(\alpha)}]=[1_{Z(\beta)}]\in H_0((\HG_{(\Gamma,\X)})_n)$ if and only if $|\alpha|=|\beta|$. Indeed, we can assume that $|\alpha|,|\beta|\geq n$. Clearly if $[1_{Z(\alpha)}]=[1_{Z(\beta)}]$ then $|\alpha|=|\beta|$. Now, assume that $|\alpha|=|\beta|$, then by Lemma \ref{minimal_action} there exists $g\in \Gamma$ such that $g\cdot \alpha=\beta$, and hence $g\cdot Z(\alpha)=Z(\beta)$. Therefore, $[1_{Z(\alpha)}]=[1_{Z(\beta)}]\in H_0((\HG_{(\Gamma,\X)})_n)$, as desired.  Thus, we can identify $H_0((\HG_{(\Gamma,\X)})_n)$ with $\ZZ[1/2]$ sending $[1_{Z(\gamma)}]$ to $\frac{1}{2^{k}}$, where $\gamma\in \X^k$. Observe that this identification is independent of $n$. On the other hand, we have that $$(\iota_{n,m})_*:H_0((\HG_{(\Gamma,\X)})_n)\to H_0((\HG_{(\Gamma,\X)})_{m})$$
is given by $[1_{Z(\alpha)}]\mapsto [1_{Z(\alpha)}]$, for $\alpha\in\X^*$. Then using the above identifications of $H_0((\HG_{(\Gamma,\X)})_n)$ and $H_0((\HG_{(\Gamma,\X)})_m)$ with $\ZZ[1/2]$, the map $(\iota_{n,m})_*$ is the identity. Therefore we have that $ H_0(\HG_{(\Gamma,\X)})\cong \ZZ[1/2]$. 


On the other hand, we have that
\begin{align*}
H_1((\HG_{(\Gamma,\X)})_n) & =\ker \delta_1/\text{im }\delta_2 =\{[f]:  f\in C_c((\HG_{(\Gamma,\X)})_n, \ZZ): \delta_1(f)=0\}\,,
\end{align*}
Let $f\in C_c((\HG_{(\Gamma,\X)})_n, \ZZ)$ with $\delta_1(f)=0$. We can write 
$$f=\sum_{i=0}^k \lambda_i 1_{Z(\alpha_i,g_i,\beta_i;U_i)}\,,$$
where $\alpha_i,\beta_i\in \X^n$, $\lambda_i\in \ZZ$, $g_i\in \Gamma$ and $U_i$ are clopen subsets of $Z(\beta_i)$.
Replacing $[f]$ with $[\iota_{n,m}(f)]\in H_1((\HG_{(\Gamma,\X)})_m)$ for big enough $m$, we can assume that 
$$f=\sum_{i=0}^k \lambda_i 1_{Z(\alpha_i,g_i,\beta_i;Z(\beta_i))}\,,$$
where $\alpha_i,\beta_i\in \X^n$, $\lambda_i\in \ZZ$ and $g_i\in \Gamma$,

We have that 
$$\delta_2\left( 1_{(Z(\alpha,g,\beta;Z(\beta))\times Z(\beta,h,\gamma;Z(\gamma)))\cap  (\HG_{(\Gamma,\X)})_n^{(2)} }\right) =1_{Z(\alpha,g,\beta;Z(\beta))}+1_{Z(\beta,h,\gamma;Z(\gamma))}-1_{Z(\alpha,gh,\gamma;Z(\gamma))}\,,$$
for every $\alpha,\beta,\gamma\in  \X^n$ and $g,h\in \Gamma$, and therefore 
\begin{equation}\label{eq_0}\left[ 1_{Z(\alpha,g,\beta;Z(\beta))}\right] +\left[ 1_{Z(\beta,h,\gamma;Z(\gamma))}\right] =\left[ 1_{Z(\alpha,gh,\gamma;Z(\gamma))}\right] \end{equation}
in $H_1((\HG_{(\Gamma,\X)})_n)$.  Now if we choose $\beta=0^n$ and $h=e$ in (\ref{eq_0}), we have that 
\begin{equation}\label{eq_1}\left[ 1_{Z(\alpha,g,\gamma;Z(\gamma))}\right]=\left[ 1_{Z(\alpha,g,0^n;Z(0^n))}\right] +\left[ 1_{Z(0^n,e,\gamma;Z(\gamma))}\right] \end{equation}
in $H_1((\HG_{(\Gamma,\X)})_n)$, and applying the above relation again we have that 
$$\left[ 1_{Z(\alpha,g,\gamma;Z(\gamma))}\right]=\left[ 1_{Z(\alpha,e,0^n;Z(0^n))}\right]+\left[ 1_{Z(0^n,g,0^n;Z(0^n))}\right] +\left[ 1_{Z(0^n,e,\gamma;Z(\gamma))}\right]\,.$$

Moroever if $\alpha=\beta=\gamma$ and $g=h=e$, we have that 
$$\left[ 1_{Z(\alpha,e,\alpha;Z(\alpha))}\right]=\left[ 1_{Z(\alpha,e,\alpha;Z(\alpha))}\right]+\left[ 1_{Z(\alpha,e,\alpha;Z(\alpha))}\right]$$
in $H_1((\HG_{(\Gamma,\X)})_n)$, and hence $\left[ 1_{Z(\alpha,e,\alpha;Z(\alpha))}\right]=0$. In particular, combining the previous equality and equation (\ref{eq_1}) we have that 
$$\left[ 1_{Z(\alpha,e,0^n;Z(0^n))}\right]+\left[ 1_{Z(0^n,e,\alpha;Z(\alpha))}\right]=0\,,$$
in $H_1((\HG_{(\Gamma,\X)})_n)$.

Therefore, combining the above computations we can assume that  
$$f=\sum_{i=0}^k \lambda_i 1_{Z(0^n,g_i,0^n;Z(0^n))} + \sum_{\alpha\in \X^n\setminus \{0^n\}} \xi_\alpha 1_{Z(\alpha,e,0^n;Z(0^n))}  \,,$$
for some $\lambda_i,\xi_\alpha\in \ZZ$. Then since $f\in \ker \delta_1$, we have that 
\begin{align*}
\delta_1(f) & = \delta_1\left(\sum_{i=0}^k \lambda_i 1_{Z(0^n,g_i,0^n;Z(0^n))} + \sum_{\alpha\in \X^n\setminus \{0^n\}} \xi_\alpha 1_{Z(\alpha,e,0^n;Z(0^n))} \right)  \\ 
& =\delta_1\left(\sum_{\alpha\in \X^n\setminus \{0^n\}} \xi_\alpha 1_{Z(\alpha,e,0^n;Z(0^n))}\right)= \sum_{\alpha\in \X^n\setminus \{0^n\}} \xi_\alpha \left( 1_{Z(\alpha)}- 1_{Z(0^n)}\right)=0\,,   
\end{align*} 
but this forces $\xi_\alpha=0$ for every $\alpha\in \X^n\setminus\{0^n\}$. Thus,
$$[f]=\sum_{i=0}^k \lambda_i \left[ 1_{Z(0^n,g_i,0^n;Z(0^n))}\right] \,,$$
in $H_1((\HG_{(\Gamma,\X)})_n)$.

 In addition, using (\ref{eq_0}) again, we have the following relation
$$[1_{Z(0^n,g,0^n;Z(0^n))}]+[1_{Z(0^n,h,0^n;Z(0^n))}]=[1_{Z(0^n,gh,0^n;Z(0^n))}]$$
for every $g,h\in \Gamma$, so it follows that 
$$[1_{Z(0^n,gh,0^n;Z(0^n))}]=[1_{Z(0^n,hg,0^n;Z(0^n))}]\,,$$
for every $g,h\in\Gamma$. Recall that $\Gamma/[\Gamma,\Gamma]=\langle \bar{a},\bar{b}\rangle \cong \ZZ/2\ZZ\oplus\ZZ/2\ZZ$. Therefore we have that
$$[f]=\lambda_a \left[ 1_{Z(0^n,a,0^n;Z(0^n))}\right] + \lambda_b  \left[ 1_{Z(0^n,b,0^n;Z(0^n))}\right] \,,$$
where $\lambda_a,\lambda_b\in \{0,1\}$.

Now recall that the map 
$$(\iota_{n,n+1})_*:H_1((\HG_{(\Gamma,\X)})_n)\to H_1((\HG_{(\Gamma,\X)})_{n+1})$$
is given by 

\begin{align*}
(\iota_{n,n+1})_* \left( \left[ 1_{Z(0^n,g,0^n;Z(0^n))}\right] \right) & = (\iota_{n,n+1})_* \left( \left[ 1_{Z(0^n,g,0^n;Z(0^n0))}+1_{Z(0^n,g,0^n;Z(0^n1))} \right] \right) \\ 
& = \left[ 1_{Z(0^n(g\cdot 0),g_{|0},0^{n+1};Z(0^{n+1}))}+1_{Z(0^n(g\cdot 1),g_{|1},0^n1;Z(0^n1))} \right]  \\
& = \left[ 1_{Z(0^{n+1},g_{|0},0^{n+1};Z(0^{n+1}))}\right] + \left[ 1_{Z(0^{n+1},g_{|1},0^{n+1};Z(0^{n+1}))} \right]  \\
\end{align*}
for every $g\in \Gamma$.
Therefore,   we have that   
\begin{align*}
(\iota_{n,n+1})_*([1_{Z(0^n,a,0^n;Z(0^n))}]) & = [1_{Z(0^{n+1},e,0^{n+1};Z(0^{n+1}))}]+[1_{Z(0^{n+1},e,0^{n+1};Z(0^{n+1}))}]=0\,,
\end{align*}
in $H_1((\HG_{(\Gamma,\X)})_{n+1})$,
and 
\begin{align*}
(\iota_{n,n+1})_*([1_{Z(0^n,b,0^n;Z(0^n))}])& = [1_{Z(0^{n+1},a,0^{n+1};Z(0^{n+1}))}]+[1_{Z(0^{n+1},b,0^{n+1};Z(0^{n+1}))}]\,.
\end{align*}
Then, given $f\in C_c((\HG_{(\Gamma,\X)})_n, \ZZ)$ with $\delta_1(f)=0$ there exists big enough $m$ such that 
$$[f]=(\iota_{n,m})_*([f])=[\iota_{n,m}(f)]=\left[ \lambda 1_{Z(0^m,b,0^m;Z(0^m))}\right] \in H_1(\HG_{(\Gamma,\X)})\,,$$
with $\lambda\in \{0,1\}$.
Thus, the map $H_1(\HG_{(\Gamma,\X)})\to \ZZ/2\ZZ=\{\bar{0},\bar{1}\}$ given by 
$$[1_{Z(0^n,a,0^n;Z(0^n))}]\to \bar{0}\qquad \text{ and }\qquad [1_{Z(0^n,b,0^n;Z(0^n))}]\to \bar{1}$$
 is an isomorphism. 
\end{proof}

\begin{lemma}\label{higher_hom}
	Let $(\Gamma,X)$ be the self-similar infinite dihedral group, $\G_{(\Gamma,\X)}$ its associated groupoid of germs, and $\HG_{(\Gamma,\X)}:=c^{-1}(0)$. Then $H_{2n}(\HG_{(\Gamma,\X)})=0$ and $H_{2n+1}(\HG_{(\Gamma,\X)})=\ZZ/2\ZZ$ for every $n\geq 1$.
\end{lemma}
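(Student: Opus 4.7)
The plan is to reduce the computation to the group homology of $\Gamma$ with coefficients in $C(\X^\infty,\ZZ)$ and then exploit the free-product structure $\Gamma\cong\ZZ/2\ZZ*\ZZ/2\ZZ$. First, I would show that for each $n\in\NN$ the groupoid $(\HG_{(\Gamma,\X)})_n$ is Kakutani equivalent to the action groupoid $\Gamma\ltimes \X^\infty$. Restricting $(\HG_{(\Gamma,\X)})_n$ to the clopen set $Z(0^n)$ gives exactly the germs $[0^n,g,0^n;0^nx]$, and under the identification $Z(0^n)\cong \X^\infty$ via $0^nx\mapsto x$ this subgroupoid becomes $\Gamma\ltimes \X^\infty$. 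The set $Z(0^n)$ is full because $[0^n,e,\beta;\beta x]\in (\HG_{(\Gamma,\X)})_n$ for every $\beta\in\X^n$, so by \cite[Theorem 3.6(2)]{Mat2}
$$H_*((\HG_{(\Gamma,\X)})_n)\cong H_*(\Gamma\ltimes\X^\infty)\cong H_*(\Gamma,C(\X^\infty,\ZZ)),$$
the last isomorphism being the standard identification of action groupoid homology with group homology with compactly supported integer coefficients.

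Next, I would apply the Bass--Serre/Mayer--Vietoris long exact sequence for the splitting $\Gamma=\langle a\rangle*\langle b\rangle$; since the edge stabilizer is trivial this yields
$$H_n(\Gamma,M)\cong H_n(\langle a\rangle,M)\oplus H_n(\langle b\rangle,M)$$
for every $\Gamma$-module $M$ and every $n\geq 2$. For $M=C(\X^\infty,\ZZ)$ the $\langle a\rangle$-summand vanishes: $a$ flips the first letter, so $\X^\infty=Z(0)\sqcup Z(1)$ and $C(\X^\infty,\ZZ)\cong \ZZ[\langle a\rangle]\otimes_\ZZ C(Z(0),\ZZ)$ is a free $\ZZ[\langle a\rangle]$-module. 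The $\langle b\rangle$-summand is more delicate because $b$ fixes the single point $1^\infty$; my plan is to use the short exact sequence of $\ZZ[\langle b\rangle]$-modules
$$0\longrightarrow C_c(\X^\infty\setminus\{1^\infty\},\ZZ)\longrightarrow C(\X^\infty,\ZZ)\longrightarrow \ZZ\longrightarrow 0,$$
whose quotient is the germ at $1^\infty$ with trivial $b$-action. Writing $\X^\infty\setminus\{1^\infty\}=\bigsqcup_{k\geq 0}Z(1^k0)$ and iterating the relations $b\cdot 0=0\cdot a$ and $b\cdot 1=1\cdot b$ shows that $b$ preserves each $Z(1^k0)$ and acts on its infinite tail as $a$, hence freely. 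Consequently $C_c(\X^\infty\setminus\{1^\infty\},\ZZ)$ is a free $\ZZ[\langle b\rangle]$-module and its positive-degree homology vanishes, so the long exact sequence collapses to $H_n(\langle b\rangle,C(\X^\infty,\ZZ))\cong H_n(\ZZ/2\ZZ,\ZZ)$ for $n\geq 2$, which is $\ZZ/2\ZZ$ in odd degrees and $0$ in even degrees.

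Combining both summands, for every $m\in\NN$ and every $k\geq 1$,
$$H_{2k}((\HG_{(\Gamma,\X)})_m)=0\qquad\text{and}\qquad H_{2k+1}((\HG_{(\Gamma,\X)})_m)\cong\ZZ/2\ZZ.$$
Since $H_*(\HG_{(\Gamma,\X)})\cong\varinjlim H_*((\HG_{(\Gamma,\X)})_m)$, vanishing in even degree passes immediately to the limit, yielding $H_{2k}(\HG_{(\Gamma,\X)})=0$ for $k\geq 1$; and a directed colimit of $2$-torsion groups is $2$-torsion, so $H_{2k+1}(\HG_{(\Gamma,\X)})$ is torsion for every $k\geq 1$. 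The principal obstacle I anticipate is the $\langle b\rangle$-calculation: unpacking the recursion to verify that $b$ acts freely on $\X^\infty\setminus\{1^\infty\}$ and that the resulting module is genuinely free over $\ZZ[\langle b\rangle]$, so that the long exact sequence cleanly reduces to the identification with $H_*(\ZZ/2\ZZ,\ZZ)$.
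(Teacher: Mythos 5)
Your argument is correct and follows the same skeleton as the paper's proof --- Kakutani equivalence of $(\HG_{(\Gamma,\X)})_n$ with $\X^\infty\rtimes\Gamma$, identification with the group homology $H_*(\Gamma,C(\X^\infty,\ZZ))$, the Mayer--Vietoris splitting $H_k(\Gamma,M)\cong H_k(\langle a\rangle,M)\oplus H_k(\langle b\rangle,M)$ for $k\geq 2$, and passage to the colimit --- but you compute the two summands by a genuinely different method. The paper invokes the periodic-resolution formula $H_{2k}(\langle g\rangle,M)\cong\ker(1+g)/(1-g)M$ from \cite[Theorem 6.2.2]{Wei} and checks by a direct, element-by-element manipulation of locally constant functions that $\ker(1+a)=(1-a)M$ and $\ker(1+b)=(1-b)M$; for the odd degrees it only uses periodicity to compare with $H_1$, which it already knows to be torsion. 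You instead argue module-theoretically: $C(\X^\infty,\ZZ)$ is an induced (indeed free) $\ZZ[\langle a\rangle]$-module because $a$ interchanges $Z(0)$ and $Z(1)$, so all its positive-degree $\langle a\rangle$-homology vanishes by Shapiro's lemma, and for $\langle b\rangle$ the short exact sequence $0\to C_c(\X^\infty\setminus\{1^\infty\},\ZZ)\to C(\X^\infty,\ZZ)\to\ZZ\to 0$, whose subobject is free (clopen fundamental domains $Z(1^k00)$) and whose quotient is the trivial module, yields $H_n(\langle b\rangle,C(\X^\infty,\ZZ))\cong H_n(\ZZ/2\ZZ,\ZZ)$ for $n\geq 2$. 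Your route buys a sharper intermediate statement --- $H_{2k+1}((\HG_{(\Gamma,\X)})_n)\cong\ZZ/2\ZZ$ exactly for $k\geq 1$, not merely torsion --- and explains conceptually why the even homology dies (the action is free away from the single fixed point $1^\infty$), whereas the paper's computation is more elementary and self-contained at the cost of longer bookkeeping with indicator functions. The one step you should make explicit is the passage from ``$b$ acts freely on each $Z(1^k0)$'' to freeness of $C(Z(1^k0),\ZZ)$ over $\ZZ[\langle b\rangle]$, which follows immediately from the clopen splitting $Z(1^k0)=Z(1^k00)\sqcup Z(1^k01)$ interchanged by $b$; with that in place the argument is complete.
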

\begin{proof}
We denote by $\X^\infty\rtimes \Gamma$ the transformation groupoid of the action $\Gamma\curvearrowright \X^\infty$, and by $\RG_{2^n}$ the full equivalence relation of $2^n$ elements. Then one can check that $(\HG_{(\Gamma,\X)})_n$ is isomorphic to the product groupoid  $(\X^\infty \rtimes \Gamma)\times \RG_{2^n}$ via the map $[S_{(\alpha,g,\beta)}, x]\mapsto ((g,x),(\alpha,\beta))$. Then $(\HG_{(\Gamma,\X)})_n$ is Kakutani equivalent to $\X^\infty\rtimes \Gamma$, and so $H_*(\X^\infty\rtimes \Gamma)\cong H_*((\HG_{(\Gamma,\X)})_n)$ for every $n$. Then using for example \cite[Theorem 3.8(2)]{Mat2} we have that $H_*(\X^\infty\rtimes \Gamma)\cong H_*(\Gamma,C(\X^\infty,\ZZ))$ where $C(\X^\infty,\ZZ)$ is a left $\Gamma$-module by the induced action $\Gamma\curvearrowright C(\X^\infty,\ZZ)$. To simplify the calculation we are going to use that $\Gamma=\langle a\rangle \ast \langle b\rangle\cong \ZZ/2\ZZ\ast\ZZ/2\ZZ$, and that $$H_k(\langle a\rangle \ast \langle b\rangle,C(\X^\infty,\ZZ))\cong H_k(\langle a\rangle,C(\X^\infty,\ZZ))\oplus H_k(\langle b\rangle,C(\X^\infty,\ZZ))\,,$$
for every $k\geq 2$ \cite[Corollary 6.2.10]{Wei}. 

First observe that $H_*(\langle a\rangle,C(\X^\infty,\ZZ))\cong H_*(\X^\infty\rtimes\langle a\rangle)$, but the transformation groupoid $\X^\infty\rtimes\langle a\rangle$ is elementary, that is principal and compact, and so $H_k(\langle a\rangle,C(\X^\infty,\ZZ))=0$ for every $k\geq 1$. So it is enough to prove that $H_{2k}(\langle b\rangle,C(\X^\infty,\ZZ))=0$ and $H_{2k+1}(\langle b\rangle,C(\X^\infty,\ZZ))=\ZZ/2\ZZ$  for $k\geq 1$.

It is shown in \cite[Theorem 6.2.2]{Wei} that 
$$H_{2k}(\langle b\rangle,C(\X^\infty,\ZZ))\cong \ker(1+b)/(b-1)(C(\X^\infty,\ZZ))\,,$$
and 
$$H_{2k+1}(\langle b\rangle,C(\X^\infty,\ZZ))\cong \ker(b-1)/(b+1)(C(\X^\infty,\ZZ))\,,$$
for $k\geq 1$.
We start computing   $H_{2k}(\langle b\rangle,C(\X^\infty,\ZZ))$. First observe that given $N>0$ we have that $\X^\infty=\left( \bigsqcup_{n=0}^{N-1}\bigsqcup_{\alpha\in \X^*}^{N-1} (Z(1^n00\alpha)\sqcup Z(1^n01\alpha))\right ) \sqcup Z(1^N)$. Let $f\in C(\X^\infty,\ZZ))$, then we can write  $f=\sum_{n=0}^{N-1} (\sum_{\alpha\in \X^*} (\lambda_{1^n00\alpha} 1_{Z(1^n00\alpha)}+ \lambda_{1^n01\alpha} 1_{Z(1^n01\alpha)}))+\lambda_{1^{N}}1_{Z(1^{N})} $ for some $N>0$ and with only finitely many $\lambda$'s not equal to zero.

Now suppose that $f$ is in the kernel of $1+b$, then  we have that 
\begin{align*}
0 &=(1+b)f=(1+b)(\sum_{n=0}^{N-1} (\sum_{\alpha\in \X^*} (\lambda_{1^n00\alpha} 1_{Z(1^n00\alpha)}+ \lambda_{1^n01\alpha} 1_{Z(1^n01\alpha)}))+\lambda_{1^{N}}1_{Z(1^{N})}) \\
& =\sum_{n=0}^{N-1} (\sum_{\alpha\in \X^*} (\lambda_{1^n00\alpha} 1_{Z(1^n00\alpha)}+ \lambda_{1^n01\alpha} 1_{Z(1^n01\alpha)}))+\lambda_{1^{N}}1_{Z(1^{N})}+ \\ 
& +\sum_{n=0}^{N-1} (\sum_{\alpha\in \X^*} (\lambda_{1^n00\alpha} 1_{Z(b\cdot 1^n00\alpha)}+ \lambda_{1^n01\alpha} 1_{Z(b\cdot 1^n01\alpha)}))+\lambda_{1^{N}}1_{Z(b\cdot 1^{N})} \\
& =\sum_{n=0}^{N-1} (\sum_{\alpha\in \X^*} (\lambda_{1^n00\alpha} 1_{Z(1^n00\alpha)}+ \lambda_{1^n01\alpha} 1_{Z(1^n01\alpha)}))+\lambda_{1^{N}}1_{Z(1^{N})}+ \\ 
& +\sum_{n=0}^{N-1} (\sum_{\alpha\in \X^*} (\lambda_{1^n00\alpha} 1_{Z(1^n01\alpha)}+ \lambda_{1^n00\alpha} 1_{Z(1^n00\alpha)}))+\lambda_{1^{N}}1_{Z(1^{N})} \\
&= \sum_{n=0}^{N-1} (\sum_{\alpha\in \X^*} ((\lambda_{1^n00\alpha}+\lambda_{1^n01\alpha}) 1_{Z(1^n00\alpha)}+ (\lambda_{1^n01\alpha}+\lambda_{1^n00\alpha}) 1_{Z(1^n01\alpha)}))+2\lambda_{1^{N}}1_{Z(1^{N})}\,.
\end{align*}
But this forces $\lambda_{1^n01\alpha}+\lambda_{1^n00\alpha}=0$ and $\lambda_{1^N}=0$. Thus, we have that 
$$f=\sum_{n=0}^{N-1} (\sum_{\alpha\in \X^*} (\lambda_{1^n00\alpha} 1_{Z(1^n00\alpha)}- \lambda_{1^n00\alpha} 1_{Z(1^n01\alpha)}))=(1-b)(\sum_{n=0}^{N-1} \sum_{\alpha\in \X^*} \lambda_{1^n00\alpha} 1_{Z(1^n00\alpha)})\,.$$
So again we have that $\ker(1+b)=(1-b)(C(\X^\infty,\ZZ))$,  hence $H_{2k}(\langle b\rangle,C(\X^\infty,\ZZ))=0$.

Finally, we compute $H_{2k+1}(\langle b\rangle,C(\X^\infty,\ZZ))$.  Let $f\in \ker(b-1)$, so we can write  $f=\sum_{n=0}^{N-1} (\sum_{\alpha\in \X^*} (\lambda_{1^n00\alpha} 1_{Z(1^n00\alpha)}+ \lambda_{1^n01\alpha} 1_{Z(1^n01\alpha)}))+\lambda_{1^{N}}1_{Z(1^{N})} $ for some $N>0$ and with only finitely many $\lambda$'s not equal to zero. Then since $f\in \ker (b-1)$ and using similar computations as above we have that $\lambda_{1^n01\alpha}=\lambda_{1^n00\alpha}$, so 
$$f=\sum_{n=0}^{N-1} (\sum_{\alpha\in \X^*} (\lambda_{1^n00\alpha} 1_{Z(1^n00\alpha)}+ \lambda_{1^n00\alpha} 1_{Z(1^n01\alpha)}))+\lambda_{1^{N}}1_{Z(1^{N})}\,. $$ But 
$$(1+b)\left( \sum_{n=0}^{N-1} \sum_{\alpha\in \X^*} \lambda_{1^n00\alpha} 1_{Z(1^n00\alpha)} +\lambda_{1^{N}}1_{Z(1^{N})}\right)= \sum_{n=0}^{N-1} \sum_{\alpha\in \X^*} (\lambda_{1^n00\alpha} 1_{Z(1^n00\alpha)}+  \lambda_{1^n00\alpha} 1_{Z(1^n01\alpha)})+2\lambda_{1^{N}}1_{Z(1^{N})}\,.$$
Thus, it follows that $\ker(b-1)/(b+1)(C(\X^\infty,\ZZ))\cong \ZZ/2\ZZ$. 
\end{proof}

\begin{theorem}\label{groupoid_hom}

	Let $(\Gamma,X)$ be the self-similar infinite dihedral group, and let $\G_{(\Gamma,\X)}$ its associated groupoid of germs. Then $H_0(\G_{(\Gamma,\X)})=0$, and $H_1(\G_{(\Gamma,\X)})\cong H_2(\G_{(\Gamma,\X)})\cong \ZZ/2\ZZ$. Moreover, $H_n(\G_{(\Gamma,\X)})$ is a torsion group for every $n\geq 3$.

\end{theorem}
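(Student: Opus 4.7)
The strategy mirrors the $K$-theoretic computation carried out in Proposition \ref{K-theory}: exploit the Kakutani equivalence $\HG_{(\Gamma,\X)} \sim \G_{(\Gamma,\X)} \times_c \ZZ$ to transfer the problem to the already-computed groupoid $\HG_{(\Gamma,\X)}$, and then run a Wang-type long exact sequence for the $\ZZ$-action to read off $H_*(\G_{(\Gamma,\X)})$.

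Concretely, the first step is to invoke the homological spectral sequence of Matui \cite[Theorem 3.8]{Mat2} (this is also the technique used in \cite{Ort}) associated to the cocycle $c$, whose $E^2$-page is $E^2_{p,q}=H_p(\ZZ,H_q(\HG_{(\Gamma,\X)}))$ and which converges to $H_{p+q}(\G_{(\Gamma,\X)})$. Since $\ZZ$ has cohomological dimension $1$, this degenerates into short exact sequences
\[
0 \longrightarrow \bigl(H_n(\HG_{(\Gamma,\X)})\bigr)_{\hat{\rho}} \longrightarrow H_n(\G_{(\Gamma,\X)}) \longrightarrow \bigl(H_{n-1}(\HG_{(\Gamma,\X)})\bigr)^{\hat{\rho}} \longrightarrow 0,
\]
where the $\ZZ$-action on $H_*(\HG_{(\Gamma,\X)})$ is induced by $\hat{\rho}$ through the Kakutani equivalence. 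This is precisely the homological analogue of the Pimsner--Voiculescu sequence employed in Proposition \ref{K-theory}.

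The second step is to pin down $\hat{\rho}_*$ on $H_0$ and $H_1$ of $\HG_{(\Gamma,\X)}$. Under the Kakutani equivalence $\hat{\rho}_*$ corresponds to the shift which passes from level-$n$ to level-$(n+1)$ generators by prepending a $0$. On $H_0(\HG_{(\Gamma,\X)})\cong \ZZ[1/2]$, exactly as in the $K$-theoretic computation of $\hat{S}^*$ in Proposition \ref{K-theory-sub}, the generator $[1_{Z(0^n,e,0^n;Z(0^n))}]$ is mapped to the corresponding generator at level $n+1$, which under $[1_n]\mapsto 1/2^n$ is multiplication by $1/2$; hence $1-\hat{\rho}_*$ is multiplication by $1/2$, a bijection of $\ZZ[1/2]$, so both invariants and coinvariants vanish. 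On $H_1(\HG_{(\Gamma,\X)})\cong \ZZ/2\ZZ$, the relations derived in the proof of Proposition \ref{lower_hom} show that, working modulo the class $[a]=\bar{0}$, the generator $[1_{Z(0^n,b,0^n;Z(0^n))}]$ passes to the corresponding level-$(n+1)$ generator, so $\hat{\rho}_*$ is the identity and both invariants and coinvariants equal $\ZZ/2\ZZ$.

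Plugging these into the short exact sequences yields the theorem. For $n=0$, $H_0(\G_{(\Gamma,\X)})\cong (H_0(\HG_{(\Gamma,\X)}))_{\hat{\rho}}=0$. For $n=1$, the sequence becomes $0\to \ZZ/2\ZZ \to H_1(\G_{(\Gamma,\X)}) \to 0 \to 0$, giving $H_1(\G_{(\Gamma,\X)})\cong \ZZ/2\ZZ$. For $n=2$, using $H_2(\HG_{(\Gamma,\X)})=0$ from Lemma \ref{higher_hom}, one obtains $H_2(\G_{(\Gamma,\X)})\cong (H_1(\HG_{(\Gamma,\X)}))^{\hat{\rho}}\cong \ZZ/2\ZZ$. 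Finally, for $n\geq 2$, by Lemma \ref{higher_hom} and the fact that $H_1(\HG_{(\Gamma,\X)})\cong \ZZ/2\ZZ$, both $H_n(\HG_{(\Gamma,\X)})$ and $H_{n-1}(\HG_{(\Gamma,\X)})$ are torsion groups; their invariants and coinvariants are therefore torsion, and $H_n(\G_{(\Gamma,\X)})$, sitting in the short exact sequence above, is an extension of torsion by torsion, hence torsion.

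The main technical hurdle is making the action $\hat{\rho}_*$ on $H_*(\HG_{(\Gamma,\X)})$ fully explicit: one must identify the bisection that implements the shift and track how it transforms the concrete generators of Proposition \ref{lower_hom}, using the self-similarity relations of the infinite dihedral action on $\X^\infty$. Once this is in hand, everything else is formal.
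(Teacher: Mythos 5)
Your plan is correct and is essentially the paper's own proof: the paper likewise passes to $\HG_{(\Gamma,\X)}$ via the Kakutani equivalence with $\G_{(\Gamma,\X)}\times_c\ZZ$, verifies by an explicit bisection $Z(0^n,e,0^{n+1};Z(0^{n+1}))\times\{1\}$ that the shift acts as multiplication by $1/2$ on $H_0(\HG_{(\Gamma,\X)})\cong\ZZ[1/2]$ and as the identity on $H_1(\HG_{(\Gamma,\X)})\cong\ZZ/2\ZZ$, and then reads off the conclusion from the Wang-type long exact sequence of \cite[Lemma 1.3]{Ort}, which is exactly your short exact sequences of coinvariants and invariants (your framing via the degenerate spectral sequence is only a cosmetic difference). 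The one step you flag as a ``technical hurdle''---identifying the implementing bisection and checking that the shift carries $[1_{Z(0^n,g,0^n;Z(0^n))}]$ to $[1_{Z(0^{n+1},g,0^{n+1};Z(0^{n+1}))}]$ with the \emph{same} $g$ (unlike the inclusion maps $\iota_n$, which restrict $g$)---is carried out in the paper exactly as you describe, so there is no gap.
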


\begin{proof}
Let   $c:\G_{(\Gamma,\X)}\to \ZZ$ be the cocycle given by $c([S_{(\alpha,g,\beta)},  x])=|\alpha|-|\beta|$, and let $\G_{(\Gamma,\X)}\times_c \ZZ$ be the skew product groupoid defined in \cite{Mat2}. It is easy to check that $(\G_{(\Gamma,\X)}\times_c \ZZ)_{\G_{(\Gamma,\X)}^{(0)}\times\{0\}}\cong \HG_{(\Gamma,\X)}$ and that $\G_{(\Gamma,\X)}^{(0)}\times\{0\}$ is a full clopen subset of $\G_{(\Gamma,\X)}\times_c\ZZ$ (see for example \cite[Lemma 4.13]{Mat2}). Therefore $\HG_{(\Gamma,\X)}$ and $\G_{(\Gamma,\X)}\times_c \ZZ$ are Kakutani equivalent and hence $H_*(\HG_{(\Gamma,\X)})\cong H_*(\G_{(\Gamma,\X)}\times_c \ZZ)$ (induced by the natural inclusion).

 Let  $\sigma:\G_{(\Gamma,\X)}\times_c \ZZ\to \G_{(\Gamma,\X)}\times_c\ZZ$ be the shift map given by $g\times\{n\}\mapsto g\times\{n+1\}$. We are going to  describe the induced maps in homology $$\sigma_*:H_i(\HG_{(\Gamma,\X)})\cong H_i(\G_{(\Gamma,\X)}\times_c\ZZ)\to H_i(\G_{(\Gamma,\X)}\times_c\ZZ)\cong H_i(\HG_{(\Gamma,\X)})\,,$$
for $i=0,1$.


The map $\sigma_*:H_0(\G_{(\Gamma,\X)}\times_c\ZZ)\to H_0(\G_{(\Gamma,\X)}\times_c\ZZ)$ is given by $[1_{Z(0^n,e,0^n;Z(0^n))\times\{0\}}]\mapsto [1_{Z(0^n,e,0^n;Z(0^n))\times\{1\}}]$. 

Define the  bisection $U:=Z(0^n,e,0^{n+1};Z(0^{n+1}))\times\{1\}\subseteq\G_{(\Gamma,\X)}\times_c\ZZ$. Then 
$$U^{-1}\cdot (Z(0^n,e,0^n;Z(0^n))\times\{1\})\cdot U=Z(0^{n+1},e,0^{n+1};Z(0^{n+1}))\times\{0\}\,,$$
so $[1_{Z(0^n,e,0^n;Z(0^{n}))\times\{1\}}]=[1_{Z(0^{n+1},e,0^{n+1};Z(0^{n+1}))\times\{0\}}]$ in $H_0(\G_{(\Gamma,\X)}\times_c\ZZ)$, and so 
$$\sigma_*([1_{Z(0^n,e,0^n;Z(0^n))\times\{0\}}])=[1_{Z(0^{n+1},e,0^{n+1};Z(0^{n+1}))\times\{0\}}]\,,$$

  Therefore since $(\G_{(\Gamma,\X)}\times_c\ZZ)_{\G_{(\Gamma,\X)}^{(0)}\times\{0\}}\cong \HG_{(\Gamma,\X)}$ and following the isomorphism given in the proof of Proposition \ref{lower_hom}, the map $\sigma_*:H_0(\HG_{(\Gamma,\X)})\cong \ZZ[1/2]\to H_0(\HG_{(\Gamma,\X)})\cong\ZZ[1/2]$ is given by $x\to \frac{x}{2}$ for every $x\in\ZZ[1/2]$. 

Now, let $[1_{Z(0^n,g,0^n;Z(0^n))\times\{0\}}]\in H_1(\G_{(\Gamma,\X)}\times_c\ZZ)$, then applying the shift map we have that $$\sigma_*\left([1_{Z(0^n,g,0^n;Z(0^{n}))\times\{0\}}] \right)=[1_{Z(0^n,g,0^n;Z(0^n))\times\{1\}}]\in H_1(\G_{(\Gamma,\X)}\times_c\ZZ)\,.$$
Define again the bisection $U:=Z(0^n,e,0^{n+1};Z(0^{n+1}))\times\{1\}\subseteq\G_{(\Gamma,\X)}\times_c\ZZ$. Then 
$$U^{-1}\cdot (Z(0^n,g,0^n;Z(0^n))\times\{1\})\cdot U=Z(0^{n+1},g,0^{n+1};Z(0^{n+1}))\times\{0\}\,,$$
so 
$$[1_{Z(0^n,g,0^n;Z(0^n))\times\{1\}}]=[1_{Z(0^{n+1},g,0^{n+1};Z(0^{n+1}))\times\{0\}}]\in H_1(\G_{(\Gamma,\X)}\times_c\ZZ)\,.$$
Therefore, since $(\G_{(\Gamma,\X)}\times_c\ZZ)_{\G_{(\Gamma,\X)}^{(0)}\times\{0\}}\cong \HG_{(\Gamma,\X)}$, we have that 
$$\sigma_*\left([1_{Z(0^n,g,0^n;Z(0^n))}] \right)=[1_{Z(0^{n+1},g,0^{n+1};Z(0^{n+1}))}]\in H_1(\HG_{(\Gamma,\X)})\,.$$
But then again following the isomorphism given in the proof of Proposition \ref{lower_hom} we have that the map
$$\sigma_*:H_1(\HG_{(\Gamma,\X)})\cong \ZZ/2\ZZ\to \ZZ/2\ZZ\cong H_1(\HG_{(\Gamma,\X)})$$
is the identity map.
Finally, using the long exact sequence of homology  given in \cite[Lemma 1.3]{Ort}
\begin{equation}
\xymatrix {  0  &  H_0(\G_{(\Gamma,\X)})\ar@{>}[l] & H_0(\HG_{(\Gamma,\X)}) \ar@{>}[l] & H_0(\HG_{(\Gamma,\X)})\ar@{>}[l]_{\Ide-\sigma_*} & H_1(\G_{(\Gamma,\X)})\ar@{>}[l] & H_1(\HG_{(\Gamma,\X)}) \ar@{>}[l]  \\  \cdots \ar@{>}[r]& H_3(\G_{(\Gamma,\X)})\ar@{>}[r]& H_2(\HG_{(\Gamma,\X)}) \ar@{>}[r]^{\Ide-\sigma_*} &H_2(\HG_{(\Gamma,\X)}) \ar@{>}[r] & H_2(\G_{(\Gamma,\X)}) \ar@{>}[r]& H_1(\HG_{(\Gamma,\X)})\ar@{>}[u]_{\Ide-\sigma_*}   }\,,
\end{equation} 
we get

$$\xymatrix {  0  &  H_0(\G_{(\Gamma,\X)})\ar@{>}[l] & \ZZ[1/2] \ar@{>}[l] & \ZZ[1/2]\ar@{>}[l]_{\Ide-\sigma_*} & H_1(\G_{(\Gamma,\X)})\ar@{>}[l] & \ZZ/2\ZZ\ar@{>}[l]  \\  \cdots \ar@{>}[r]& H_3(\G_{(\Gamma,\X)})\ar@{>}[r]& 0 \ar@{>}[r]^{\Ide-\sigma_*} &0 \ar@{>}[r] & H_2(\G_{(\Gamma,\X)}) \ar@{>}[r]& \ZZ/2\ZZ\ar@{>}[u]_{\Ide-\sigma_*}   }\,,$$ 

It follows that $H_0(\G_{(\Gamma,\X)})=0$ and $H_1(\G_{(\Gamma,\X)})\cong H_2(\G_{(\Gamma,\X)})\cong \ZZ/2\ZZ$.

Moreover, by Lemma \ref{higher_hom} we have  the exact sequences
$$\xymatrix {  0  &  H_{2k+1}(\G_{(\Gamma,\X)})\ar@{>}[l] & H_{2k+1}(\HG_{(\Gamma,\X)}) \ar@{>}[l] & H_{2k+1}(\HG_{(\Gamma,\X)})\ar@{>}[l]_{\Ide-\sigma_*} & H_{2k+2}(\G_{(\Gamma,\X)})\ar@{>}[l] & 0 \ar@{>}[l]   }\,,
$$
for every $k\geq 1$.
Since $H_{2k+1}(\HG_{(\Gamma,\X)})$ are torsion groups so are $H_{2k+1}(\G_{(\Gamma,\X)})$ and $H_{2k+2}(\G_{(\Gamma,\X)})$.
\end{proof}

\subsection{Conclusion}

The $C^*$-algebra $C^*(\Gamma,\X)$ of the self-similar infinite dihedral group is isomorphic to the reduced groupoid $C^*$-algebra $C^*_r(\G_{(\Gamma,\X)})$ \cite[Corollary 5.7]{Nek}. On one hand, we have that $C^*(\Gamma,\X)$ is a unital, purely infinite simple $C^*$-algebra with $K$-theory $K_*(C^*(\Gamma,\X))=(\ZZ,\ZZ)$ (Proposition \ref{K-theory}). On the other hand $\G_{(\Gamma,\X)}$ is a minimal, effective, Hausdorff ample groupoid such that all its homology groups are torsion groups (Theorem \ref{groupoid_hom}). Therefore we have that 
$$\mathbb{Q}\cong K_i(C^*_r(\G_{(\Gamma,\X)}))\otimes \mathbb{Q}\ncong \sum_{k=0}^\infty H_{i+2k}(\G_{(\Gamma,\X)})\otimes \mathbb{Q}=0\qquad \text{for }i=0,1\,,$$
Therefore, $\G_{(\Gamma,\X)}$ contradicts the rational HK-conjecture.

\section{The AH-conjecture}
In this final section we will prove that the groupoid $\G_{(\Gamma,\X)}$ of the self-similar infinite dihedral group satisfies the AH-conjecture posted in \cite{Mat4}. This conjecture predicts that there is an exact sequence 
$$H_0(\G_{(\Gamma,\X)})\otimes \ZZ/2\ZZ\longrightarrow [[\G_{(\Gamma,\X)}]]_{ab}\longrightarrow H_1(\G_{(\Gamma,\X)})\longrightarrow 0\,.$$ 
This conjecture has been confirmed for several cases, and so far no counter-examples have been found. Among the cases the conjecture has been confirmed there are the Katsura-Exel-Pardo groupoids, or what is the same, the groupoids of a special self-similar action of $\ZZ$ over a finite graph \cite{NO}. 	With the techniques developed in \cite{NO}, that are inspired from the ones in \cite{Mat3}, one can see that an analysis of the properties of the kernel of the canonical cocycle is crucial. 

Most of the techniques used in \cite{NO} can be used in our case, since  we take the particular case of graph with one vertex and two loops, and replacing the group of integers by the infinite Dihedral group does not affect the results given in there.

First we will introduce notation and definitions. Let $\G$ be an  effective, Hausdorff ample groupoid. A bisection $U\subseteq \G$ is called \emph{full} if $r(U)=s(U)=\G^{(0)}$. Given a bisection $U$ with $r(U)\cap s(U)=\emptyset$ we define the full bisection 
$$\hat{U}:=U\sqcup U^{-1}\sqcup (\G^{(0)}\setminus \{r(U)\cup s(U)\})\,.$$

Then given a full bisection $U$ we define the homeomorphism $\pi_U: \G^{(0)}\to \G^{(0)}$ given by $s(\gamma)\mapsto r(\gamma)$ for every $\gamma\in U$. If $U$ is a bisection with $r(U)\cap s(U)=\emptyset$, we say that $\pi_{\hat{U}}$ is a \emph{transposition}.

Then we define the \emph{topological full group of $\G$} as the set $[[\G]]:=\{\pi_U:U\text{ full bisection}\}$ with composition as a product, so that $\pi_U\circ \pi_V=\pi_{UV}$. Then we can define the \emph{index map} $I:[[\G]]\to H_1(\G)$ as the homomorphism given by $\pi_U\mapsto [1_U]$. Since $H_1(\G)$ is an abelian group, we can induce the map $I_{ab}:[[\G]]_{ab}\to H_1(\G)$ where $[[\G]]_{ab}$ is the abelianization of $[[\G]]$.  We let $\TT(\G)$ denote the subgroup of $[[\G]]$ generated by all transpositions. One always has that $\TT(\G)\subseteq \ker(I)$, and having equality is closely related to the AH-conjecture.

\begin{definition}
	Let $\G$ be an effective, Hausdorff ample groupoid. We say that $\G$ has \emph{Property TR} if $\TT(\G)=\ker(I)$.
\end{definition}

It was shown that if the groupoid $\G$ is purely infinite (in the sense of \cite[Definition 4.9]{Mat3}) then Property TR is equivalent to the confirmation of the AH conjecture.

\begin{proposition}\label{TR_HG}
	Let $(\Gamma,\X)$ be the self-similar infinite dihedral group. Then the groupoid $\HG_{(\Gamma,\X)}$ satisfies Property TR.
\end{proposition}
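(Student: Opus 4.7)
Since $\TT(\HG_{(\Gamma,\X)})\subseteq \ker(I)$ holds tautologically, the task is to establish the reverse inclusion. Given $\pi_U\in \ker(I)$, the first step is to pass to a finite level: $U$ is compact and therefore decomposes as a finite disjoint union of basic bisections $Z(\alpha_i,g_i,\beta_i;V_i)$, and padding the words $\alpha_i,\beta_i$ with suffixes from $\X^*$ of a common length arranges $|\alpha_i|=|\beta_i|=n$, so that $U\subseteq (\HG_{(\Gamma,\X)})_n$. Because $H_1(\HG_{(\Gamma,\X)})=\varinjlim H_1((\HG_{(\Gamma,\X)})_n)$ and $[1_U]$ vanishes in the direct limit, enlarging $n$ if necessary I may assume $[1_U]=0$ already in $H_1((\HG_{(\Gamma,\X)})_n)$.

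The next reduction uses the product decomposition $(\HG_{(\Gamma,\X)})_n\cong (\X^\infty\rtimes \Gamma)\times \RG_{2^n}$ from Lemma \ref{higher_hom}. Viewing $U$ as an $\X^n\times\X^n$-indexed matrix of pieces of $\X^\infty\rtimes \Gamma$, the matrix-unit bisections arising from $\RG_{2^n}$ are (upon completion) transpositions, and their products realize every permutation of the $\X^n$-indices inside $\TT(\HG_{(\Gamma,\X)})$. Multiplying $\pi_U$ by a suitable product of such transpositions reduces to the case where the matrix is diagonal, namely $U=\bigsqcup_{\alpha\in\X^n} Z(\alpha,g_\alpha,\alpha;V_\alpha)$. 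Under the identification of $H_1((\HG_{(\Gamma,\X)})_n)$ with $C(\X^\infty,\ZZ/2\ZZ\oplus\ZZ/2\ZZ)$ obtained in the proof of Proposition \ref{lower_hom}, the vanishing $[1_U]=0$ then forces the abelianized class of each $g_\alpha$ to vanish pointwise on its support; equivalently, every $g_\alpha$ must use an even number of each generator $a$ and $b$ when written as a word.

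Finally, I decompose each diagonal atom by means of the recursion $a\cdot 0=1\cdot e$, $a\cdot 1=0\cdot e$, $b\cdot 0=0\cdot a$, $b\cdot 1=1\cdot b$ into a word in level-$(n+1)$ atoms labelled by $a$ or $b$, interspersed with matrix-unit bisections that are already transpositions. Any $a$-labelled atom completes to a transposition, since $a$ swaps $Z(\alpha 0)$ and $Z(\alpha 1)$ with trivial restriction. The remaining task is to show that any collection of $b$-labelled atoms with vanishing $\ZZ/2\ZZ$-content is a product of transpositions; following the strategy of \cite[Section 3]{NO}, one pairs such atoms by means of bisection commutator identities and absorbs each pair into $\TT(\HG_{(\Gamma,\X)})$, the abelianization hypothesis guaranteeing that all $b$-atoms can be so paired. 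I expect this final pairing-by-commutators step to be the main obstacle, as it demands careful bookkeeping of the supports and ranges of the bisections involved; the simplified setting of a single vertex with two loops, however, should make the adaptation of the arguments of \cite{NO} manageable, as those authors suggest.
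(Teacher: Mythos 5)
Your overall skeleton --- pass to a finite level, split off the ``permutation part'' as a product of transpositions using the decomposition $(\HG_{(\Gamma,\X)})_n\cong(\X^\infty\rtimes\Gamma)\times\RG_{2^n}$, then deal with a diagonal bisection $U=\bigsqcup_i Z(\alpha_i,g_i,\alpha_i;V_i)$ --- matches the paper's reductions (which invoke \cite[Lemma 5.3]{NO} for the permutation part). But there are two problems. First, the deduction that $[1_U]=0$ forces each $g_i$ to have trivial image in $\Gamma_{ab}$ is false. Under the identification $H_1((\HG_{(\Gamma,\X)})_n)\cong C(\X^\infty,\ZZ/2\ZZ\oplus\ZZ/2\ZZ)$ the class of a diagonal $U$ is the \emph{sum} $\sum_i\overline{g_i}\,1_{\tilde{V}_i}$, where $\tilde{V}_i$ is $V_i$ transported to $\X^\infty$; after the $\RG_{2^n}$-factor is collapsed these supports overlap, so only the total must vanish, not each term. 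For instance $U=Z(0,a,0;Z(0))\sqcup Z(1,a,1;Z(1))$ lies in $\ker(I)$ although $\bar{a}\neq 0$ in $\Gamma_{ab}$. The bisections whose labels are individually nontrivial in $\Gamma_{ab}$ but cancel globally are precisely the hard inputs to Property TR, so this step cannot stand as written.

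Second --- and this is the real gap --- the step you yourself flag as ``the main obstacle,'' pairing up the $b$-labelled atoms by commutator identities, is where all the work lies, and it is not carried out. The paper resolves it without any commutator calculus: multiplying $U$ by the transpositions $\hat{V}_i\hat{W}_i$ built from $V_i=Z(\alpha_1,g_i,\alpha_i;Z(\alpha_i))$ and $W_i=Z(\alpha_1,e,\alpha_i;Z(\alpha_i))$ collects all the labels into the single cylinder $Z(\alpha_1)$, reducing to one bisection $Z(\alpha_1,g,\alpha_1;Z(\alpha_1))$ (identity elsewhere) with $g=\prod_i g_i$ lying in the index-two subgroup $\ker\rho$, where $\rho$ records the parity of the number of $b$'s. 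That subgroup is generated by $a$ and $bab$, and the recursion shows each of these carries $Z(\alpha_1 0)$ homeomorphically onto $Z(\alpha_1 1)$ (with restrictions $e$ and $ba$ respectively), so the associated full bisections $\hat{W}_a$ and $\hat{W}_{bab}$ are genuine transpositions; writing $g$ as a word in $a$ and $bab$ finishes the argument. Replacing your pairing-by-commutators step with this ``collect into one cylinder, then use generators of $\ker\rho$'' device is the missing idea needed to complete the proof.
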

\begin{proof}
	The proof is a verbatim of the proof of \cite[Lemma 5.3 and Proposition 5.4]{NO} with a slight modification because of the structure of the self-similar group $(\Gamma,\X)$.
	First observe that since $\HG_{(\Gamma,\X)}=\bigcup_{n=1}^\infty (\HG_{(\Gamma,\X)})_n$ and $(\HG_{(\Gamma,\X)})_n^{(0)}=\HG_{(\Gamma,\X)}^{(0)}$, we have that $[[(\HG_{(\Gamma,\X)})_n]]\subseteq [[(\HG_{(\Gamma,\X)})_{n+1}]]$ and $[[\HG_{(\Gamma,\X)}]]=\bigcup_{n=1}^\infty [[(\HG_{(\Gamma,\X)})_n]]$.

	Now suppose that $I(\pi_U)=0\in H_{1}(\HG_{(\Gamma,\X)})$, where  $\pi_U\in [[(\HG_{(\Gamma,\X)})_n]]$ for some $n\in \NN$.
	 Without loss of generality we can assume that $U=\bigsqcup_{i=1}^kZ(\alpha_i,g_i,\beta_i;Z(\beta_i))$ where $\alpha_i,\beta_i\in \X^n$ and $g_i\in \Gamma$ and $\bigsqcup_{i=1}^k Z(\alpha_i)=\bigsqcup_{i=1}^k Z(\beta_i)=\X^\infty$.
	 Then by the identifications given in the proof of Proposition \ref{lower_hom} we have that  
	 $$I(\pi_U)=\sum_{i=1}^k[1_{Z(0^n,\rho(g_i),0^n;Z(0^n))}]=[1_{Z(0^n,\rho(\prod_{i=1}^k g_i),0^n;Z(0^n)}]\,,$$
where $\rho:\Gamma\to \ZZ/2\ZZ$ is the group homomorphism that sends $a\mapsto \bar{0}$ and $b\mapsto \bar{1}$. Then since $I(\pi_U)=0\in H_1(\HG_{(\Gamma,\X)})$, it follows that $\rho(\prod_{i=1}^k g_i)=\sum_{i=1}^k\rho(g_i)=0$.	 But this means that $\prod_{i=1}^k g_i$ has the form either $a(bab)^l$ or $(bab)^la$ for some $l\geq 0$.

Now observe that $U=U_1\cdot U_2$ where 
$$U_1:=\bigsqcup_{i=1}^kZ(\alpha_i,g_i,\alpha_i;Z(\alpha_i))\qquad\text{and}\qquad U_2:=\bigsqcup_{i=1}^kZ(\alpha_i,e,\beta_i;Z(\beta_i))\,,$$
so $\pi_U=\pi_{U_1}\circ\pi_{U_2}$. It is shown in the proof of \cite[Lemma 5.3]{NO} that $\pi_{U_2}$ is a product of transpositions, so we can assume that 
$$U=U_1=\bigsqcup_{i=1}^kZ(\alpha_i,g_i,\alpha_i;Z(\alpha_i))$$
with $\rho(\prod_{i=1}^k g_i)=0$. Now given $1<i\leq k$ we define the bisections 
$$V_i:=Z(\alpha_1,g_i,\alpha_i;Z(\alpha_i))\qquad \text{and}\qquad W_i:=Z(\alpha_1,e,\alpha_i;Z(\alpha_i))\,.$$
then we have that
$$U\hat{V_2}\hat{W_2}\cdots \hat{V_k}\hat{W_k}=Z(\alpha_1,\prod_{i=1}^k g_i,\alpha_1;Z(\alpha_1))\sqcup \bigsqcup_{i=2}^k Z(\alpha_i,e,\alpha_i;Z(\alpha_i))\,.$$

First observe that if we define 
$$W_a:=Z(\alpha_11,a,\alpha_10 ;Z(\alpha_10))\qquad\text{and}\qquad W_{bab}:=Z(\alpha_11,bab,\alpha_10;Z(\alpha_10))\,,$$
then
$$Z(\alpha_1,a,\alpha_1;Z(\alpha_1))\sqcup \bigsqcup_{i=2}^k Z(\alpha_i,e,\alpha_i;Z(\alpha_i))=\hat{W}_a\,,$$
and 
$$Z(\alpha_1,bab,\alpha_1;Z(\alpha_1))\sqcup \bigsqcup_{i=2}^k Z(\alpha_i,e,\alpha_i;Z(\alpha_i))=\hat{W}_{bab}\,.$$
Then since $\rho(\prod_{i=1}^k g_i)=0$ we have that $\prod_{i=1}^k g_i$ is equal to $a(bab)^l$ or $(bab)^la$ for some $l\geq 0$. First suppose that $\prod_{i=1}^k g_i=a(bab)^l$, then 
$$U\hat{V_2}\hat{W_2}\cdots \hat{V_k}\hat{W_k}=\hat{W}_a \hat{W}_{bab}\cdots\hat{W}_{bab}\,.$$
Therefore, $\pi_U(\pi_{\hat{V_2}}\pi_{\hat{W_2}}\cdots \pi_{\hat{V_k}}\pi_{\hat{W_k}})=\pi_{\hat{W}_a}\pi_{\hat{W}_{bab}}^l$, and hence $\pi_U\in \TT(\HG_{(\Gamma,\X)})$, as desired. In a similar way we can prove it when  $\prod_{i=1}^k g_i=(bab)^la$.
	 
 \end{proof}

\begin{theorem}\label{AH-conjecture}
	Let $(\Gamma,\X)$ be the self-similar infinite dihedral group. Then the groupoid $\G_{(\Gamma,\X)}$ satisfies the AH-conjecture.
\end{theorem}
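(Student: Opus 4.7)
The plan is to deduce the AH-conjecture from Property TR, exactly as in the framework set up just before the statement. Since Section 3 shows that $C^*_r(\G_{(\Gamma,\X)})$ is simple and purely infinite (and the groupoid is minimal, effective, Hausdorff, and ample), the groupoid $\G_{(\Gamma,\X)}$ is purely infinite in the sense of \cite{Mat3}. By the cited equivalence, it therefore suffices to establish Property TR for $\G_{(\Gamma,\X)}$ itself; the exact sequence will then follow, and since $H_0(\G_{(\Gamma,\X)})=0$ and $H_1(\G_{(\Gamma,\X)})\cong \ZZ/2\ZZ$ by Theorem \ref{groupoid_hom}, we will conclude $[[\G_{(\Gamma,\X)}]]_{ab}\cong \ZZ/2\ZZ$.

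The heart of the argument is lifting Property TR from $\HG_{(\Gamma,\X)}$ (Proposition \ref{TR_HG}) to the full groupoid $\G_{(\Gamma,\X)}$. Given $\pi_U\in\ker(I)$ with $U=\bigsqcup_{i=1}^k Z(\alpha_i,g_i,\beta_i;Z(\beta_i))$ a full bisection, the pieces need not satisfy $|\alpha_i|=|\beta_i|$, so $U$ lies outside $\HG_{(\Gamma,\X)}$ in general. The strategy, following \cite{NO}, is to use ``shift'' bisections of the form $V_{n,m}:=Z(0^n,e,0^{n+m};Z(0^{n+m}))$ (cf.\ the bisections already used in the proof of Theorem \ref{groupoid_hom}) to construct transpositions that absorb the cocycle-changing components of $U$. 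Concretely, I would first cut $U$ up finely enough so that on each piece the cocycle is constant and then rewrite each piece $Z(\alpha_i,g_i,\beta_i;Z(\beta_i))$ (with $|\alpha_i|\neq|\beta_i|$) as a product of a piece in $\HG_{(\Gamma,\X)}$ times a bisection of ``pure shift'' type. The pure-shift bisections can be expressed as products of transpositions in $[[\G_{(\Gamma,\X)}]]$ by the same argument as in \cite[Lemma 5.3]{NO}. After this reduction, what remains is an element $\pi_{U'}\in [[\HG_{(\Gamma,\X)}]]$, whose index in $H_1(\HG_{(\Gamma,\X)})$ may differ from $I(\pi_U)$ only by contributions from the transpositions already peeled off, hence still lies in $\ker(I|_{\HG_{(\Gamma,\X)}})$. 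Proposition \ref{TR_HG} then expresses $\pi_{U'}$ as a product of transpositions in $\TT(\HG_{(\Gamma,\X)})\subseteq \TT(\G_{(\Gamma,\X)})$.

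The main obstacle is the bookkeeping needed to check that the lifting trick really preserves the property of being in $\ker(I)$ at each step; in particular one has to verify that the index map of the transpositions introduced vanishes in $H_1(\G_{(\Gamma,\X)})$, which uses the natural map $H_1(\HG_{(\Gamma,\X)})\to H_1(\G_{(\Gamma,\X)})$ computed in Theorem \ref{groupoid_hom}. Since $(\Gamma,\X)$ is pseudo-free and has only two generators of order two, these computations reduce to the explicit generator count already carried out in Proposition \ref{lower_hom}, and the argument of \cite[Proposition 5.4]{NO} adapts essentially verbatim, replacing the $\ZZ$-factor there by the abelianization $\Gamma/[\Gamma,\Gamma]\cong \ZZ/2\ZZ\oplus\ZZ/2\ZZ$; this parallels the modification already made in the proof of Proposition \ref{TR_HG}.

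Once Property TR is established, the AH-conjecture follows from the cited equivalence, and substituting the groups computed in Theorem \ref{groupoid_hom} into the exact sequence yields
$$0\longrightarrow [[\G_{(\Gamma,\X)}]]_{ab}\longrightarrow \ZZ/2\ZZ\longrightarrow 0\,,$$
so that $[[\G_{(\Gamma,\X)}]]_{ab}\cong \ZZ/2\ZZ$, completing the proof and confirming the claim made in the abstract.
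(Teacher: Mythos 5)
Your proposal is correct and follows essentially the same route as the paper: reduce to Property TR via pure infiniteness, factor a full bisection $U=VW$ with $V\subseteq\HG_{(\Gamma,\X)}$ and $W$ of pure-shift type, dispose of $\pi_W$ by transpositions coming from the full one-sided shift subgroupoid, and apply Proposition \ref{TR_HG} to $\pi_V$ using the isomorphism $\Phi:H_1(\HG_{(\Gamma,\X)})\to H_1(\G_{(\Gamma,\X)})$ to transfer the vanishing of the index. The only cosmetic difference is that the paper handles the shift part by identifying it inside the subgroupoid $\G_2$ of the full two-letter shift and citing Matui's result there, rather than re-running the argument of \cite[Lemma 5.3]{NO}.
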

\begin{proof}
Since $\G_{(\Gamma,\X)}$ is a minimal purely infinite ample groupoid,  it is enough to prove that $\G_{(\Gamma,\X)}$ satisfies Property TR \cite[Theorem 4.4]{Mat4}. 
Let $I:[[\G_{(\Gamma,\X)}]]\to H_1(\G_{(\Gamma,\X)})$ and $I_\HG:[[\HG_{(\Gamma,\X)}]]\to H_1(\HG_{(\Gamma,\X)})$ be the respective index maps.
Let $\Phi:H_1(\HG_{(\Gamma,\X)})\to H_1(\G_{(\Gamma,\X)})$ be the map in the long exact sequence of the proof of Theorem \ref{groupoid_hom}, that is an isomorphism. An inspection of how to construct this map shows that $\Phi([1_{Z(0^n,b,0^n;Z(0^n))}])=[1_{Z(0^n,b,0^n;Z(0^n))}]$ and hence given a full bisection $U\subseteq \HG_{(\Gamma,\X)}$ we have that $I(\pi_U)=\Phi(I_\HG(\pi_U))$ (c.f. \cite[Lemma 4.6]{NO}). Then since $\Phi$ is an isomorphism, $I(\pi_U)=0$ if and only if $I_\HG(\pi_U)=0$. 

Now let  a full bisection $U\subseteq \G_{(\Gamma,\X)}$ such that $I(\pi_U)=0$. Then $U=\bigsqcup_{i=1}^k Z(\alpha_i,g_i,\beta_i;Z(\beta_i))$ for $\alpha_i,\beta_i\in \X^*$ and $g_i\in \Gamma$ such that $\X^\infty=\bigsqcup_{i=1}^k Z(\alpha_i)=\bigsqcup_{i=1}^k Z(\beta_i)$. Then if we define the full bisections $V=\bigsqcup_{i=1}^k Z(\alpha_i,g_i,\alpha_i;Z(\alpha_i))\subseteq \HG_{(\Gamma,\X)}$ and $W=\bigsqcup_{i=1}^kZ(\alpha_i,e,\beta_i;Z(\beta_i))$, we have that $U=VW$, and hence 
$$0=I(\pi_U)=I(\pi_V\pi_W)=I(\pi_V)+I(\pi_W)\,.$$
Now observe that $\pi_W$ can be seen as an element of the topological full group of the open subgroupoid 
$$\G_2:=\{[\alpha,e,\beta;x]\in \G_{(\Gamma,\X)}: \alpha,\beta\in \X^*,\,x\in Z(\beta) \}\,,$$ 
that is isomorphic to the groupoid of one-sided full shift with two letters, so $[[\G_2]]\subseteq [[\G_{(\Gamma,\X)}]]$. It is known that $\pi_W\in\TT(\G_2)\subseteq\TT(\G_{(\Gamma,\X)})$ by \cite[Lemma 6.10]{Mat3}, so $I(\pi_W)=0$. Therefore 
$I(\pi_U)=I(\pi_V)=0$, and then $I_\HG(\pi_U)=0$. Thus, by Proposition \ref{TR_HG} it follows that $\pi_U\in \TT(\HG_{(\Gamma,\X)})\subseteq\TT(\G_{(\Gamma,\X)})$, and hence $\pi_U=\pi_V\pi_W\in \TT(\G_{(\Gamma,\X)})$ as desired.  
\end{proof}

\begin{corollary}
	Let $(\Gamma,\X)$ be the self-similar infinite dihedral group. Then $[[\G_{(\Gamma,\X)}]]_{ab}\cong \ZZ/2\ZZ$ and it is generated by the class of $\pi_U$, where $U=Z(\emptyset,b,\emptyset;\X^\infty)$. 
\end{corollary}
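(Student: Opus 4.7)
The plan is to derive this corollary as a direct corollary of Theorem~\ref{AH-conjecture} together with the homology computation of Theorem~\ref{groupoid_hom}. First I will invoke Theorem~\ref{AH-conjecture}, which confirms the AH-conjecture for $\G_{(\Gamma,\X)}$, giving the exact sequence
\[H_0(\G_{(\Gamma,\X)})\otimes \ZZ/2\ZZ \longrightarrow [[\G_{(\Gamma,\X)}]]_{ab} \stackrel{I_{ab}}{\longrightarrow} H_1(\G_{(\Gamma,\X)}) \longrightarrow 0,\]
where the right map is the one induced by the index map $I$. Substituting the values $H_0(\G_{(\Gamma,\X)})=0$ and $H_1(\G_{(\Gamma,\X)})\cong \ZZ/2\ZZ$ from Theorem~\ref{groupoid_hom} will make the leftmost term vanish, so exactness forces $I_{ab}$ to be an isomorphism. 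This yields at once $[[\G_{(\Gamma,\X)}]]_{ab}\cong \ZZ/2\ZZ$.

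The second step is to identify an explicit generator. I take $U:=Z(\emptyset,b,\emptyset;Z(\emptyset))$, which is a full bisection because $s(U)=r(U)=Z(\emptyset)=\X^\infty$, so $\pi_U\in [[\G_{(\Gamma,\X)}]]$ is well defined. By the definition of the index map, $I(\pi_U)=[1_U]\in H_1(\G_{(\Gamma,\X)})$. To show this is the nontrivial element, I will trace it through the two isomorphisms already built in the paper: the isomorphism $\Phi: H_1(\HG_{(\Gamma,\X)})\to H_1(\G_{(\Gamma,\X)})$ from the long exact sequence in Theorem~\ref{groupoid_hom} (already observed there to satisfy $\Phi([1_{Z(0^n,b,0^n;Z(0^n))}])=[1_{Z(0^n,b,0^n;Z(0^n))}]$), and the explicit identification $H_1(\HG_{(\Gamma,\X)})\cong \ZZ/2\ZZ$ sending $[1_{Z(0^n,b,0^n;Z(0^n))}]\mapsto \bar 1$ constructed at the end of Proposition~\ref{lower_hom}. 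Specialising to $n=0$, so that $0^n=\emptyset$, the class $[1_U]$ corresponds to $\bar 1\in\ZZ/2\ZZ$, so its preimage under $I_{ab}$ is the class $[\pi_U]_{ab}$ of $\pi_U$, which therefore generates $[[\G_{(\Gamma,\X)}]]_{ab}$.

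I do not expect any real obstacle: once Theorems~\ref{AH-conjecture} and~\ref{groupoid_hom} are in hand the corollary is an assembly exercise, and the only point requiring a little care is to verify that $U=Z(\emptyset,b,\emptyset)$ actually corresponds under the chain of identifications to the generator $\bar 1$ of $H_1(\G_{(\Gamma,\X)})$, which is precisely what the formula for $\Phi$ recorded in the proof of Theorem~\ref{AH-conjecture} makes automatic.
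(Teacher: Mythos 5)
Your proposal is correct and follows essentially the same route as the paper: the exact sequence from Theorem~\ref{AH-conjecture} together with $H_0(\G_{(\Gamma,\X)})=0$ and $H_1(\G_{(\Gamma,\X)})\cong\ZZ/2\ZZ$ from Theorem~\ref{groupoid_hom} forces $I_{ab}$ to be an isomorphism, and the generator is identified by tracing $I(\pi_U)=[1_{Z(\emptyset,b,\emptyset)}]$ through the isomorphisms of Proposition~\ref{lower_hom} and the map $\Phi$ to land on $\bar 1\in\ZZ/2\ZZ$. Your write-up is in fact more explicit than the paper's (which only says the generator claim ``follows from an inspection of the proofs''), but the substance is identical.
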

\begin{proof}
	By Theorem \ref{AH-conjecture} and Theorem \ref{groupoid_hom} we have that $[[\G_{(\Gamma,\X)}]]_{ab}\cong H_1(\G_{(\Gamma,\X)})\cong \ZZ/2\ZZ$. Finally the last statement follows from an inspection of the proofs of Proposition \ref{TR_HG} and Theorem \ref{AH-conjecture}.
\end{proof}

\end{document}